\documentclass[a4paper,12pt]{article}

\RequirePackage{amsthm,amsmath,amsfonts,amssymb}
\RequirePackage[numbers]{natbib}
\RequirePackage{graphicx}

\theoremstyle{plain}
\newtheorem{theorem}{Theorem}[section]
\newtheorem{proposition}[theorem]{Proposition}
\newtheorem{lemma}[theorem]{Lemma}
\newtheorem{corollary}[theorem]{Corollary}

\theoremstyle{definition}
\newtheorem{definition}[theorem]{Definition}
\newtheorem{example}[theorem]{Example}
\newtheorem{remark}[theorem]{Remark}
\newtheorem{algorithm}{Algorithm}

\begin{document}

\begin{center}

  \Large
      {\bf Direct Sampling from Conditional Distributions
        by Sequential Maximum Likelihood Estimations}
  \normalsize

  \bigskip By \bigskip

  \textsc{Shuhei Mano}

  \smallskip
  
  The Institute of Statistical Mathematics, Japan


\end{center}

\small

{\bf Abstract.}
We can directly sample from the conditional distribution of any
log-affine model. The algorithm is a Markov chain on a bounded
integer lattice, and its transition probability is the ratio of
the UMVUE (uniformly minimum variance unbiased estimator) of
the expected counts to the total number of counts.
The computation of the UMVUE accounts for most of
the computational cost, which makes the implementation challenging.
Here, we investigated an approximate algorithm that replaces
the UMVUE with the MLE (maximum likelihood estimator).
Although it is generally not exact, it is efficient and easy to
implement; no prior study is required, such as about the connection
matrices of the holonomic ideal in the original algorithm.

\smallskip

{\it Key Words and Phrases.}
$A$-hypergeometric system, discrete exponential family,
Gr\"obner bases, iterative proportional scaling,
log-affine model, Markov chain Monte Carlo,
Metropolis algorithm, rational maximum likelihood estimator,
uniformly minimum variance unbiased estimator

\smallskip

2020 {\it Mathematics Subject Classification Numbers.}
62R01, 33C90, 33F99, 62H17, 65C05

\normalsize

\section{Introduction}
\label{sect:intr}

Consider a discrete sample with state space
$[m]:=\{1,2,\ldots,m\}$ for a positive integer $m$.
In this paper, we will discuss the {\it discrete exponential
families} in statistics, which are also called log-affine
models or toric models.

\begin{definition}\label{defi:toric}
  Let $A=(a_{ij})$ $\in \mathbb{Z}^{d\times m}$ be
  a matrix of integers such that no row or column is
  the zero vector and $(1,\ldots,1)\in {\rm rowspan}(A)$.
  Let $x\in\mathbb{R}^m_{>0}$. The {\it log-affine model}
  associated with the {\it configuration matrix} $A$
  is the set of probability distributions 
  \[
    \mathcal{M}_A:={\rm cl}\{p\in{\rm int}(\Delta_{m-1}):
    \log p \in \log x+{\rm rowspan}(A)\},
  \]
  where $\Delta_{m-1}$ is the standard $m$-dimensional simplex.
  If $x=1$, the model is called the {\it log-linear model.}  
\end{definition}

In this paper, we consider the closure, called the
extended log-affine model by \cite[Section~4.2.3]{Lau94},
whose support may be at the boundary of the simplex.
This extension ensures the existence of the maximum
likelihood estimate (MLE), whether or not observed counts
satisfy constraints to be positive.

The vector $p$ in Definition~\ref{defi:toric} may be
parameterized as
\begin{equation}
  p_j=\frac{x_j}{Z(t,x)}
  \prod_{i\in[d]}t_i^{a_{ij}}, \quad j \in [m]
  \label{para}
\end{equation}
with unknown parameters $t\in\mathbb{R}_{>0}^d$ and
the normalization constant $Z(t,x)$ such that
$\sum_{j\in[m]} p_j=1$. A sample consisting of
observations in which counts of the $j$-th state is
$u_j$, $j\in[m]$ taken from the multinomial distribution
specified by the probability
mass function \eqref{para} follows the probability law
\begin{equation}\label{prob}
  \mathbf{P}(U=u,AU=b)
  =|u|!\prod_{j\in[m]}\frac{p_j^{u_j}}{u_j!}
  =\frac{|u|!}{\{Z(t,x)\}^{|u|}}
  \prod_{i\in[d]}t_i^{b_i}
  \prod_{j\in[m]}\frac{x_j^{u_j}}{u_j!},
\end{equation}
where the total number of counts is $|u|:=u_1+\cdots+u_m$.
The conditional distribution given by complete minimal
sufficient statistics $b$ in the affine semigroup
$\mathbb{N}A:=\{Av:v\in\mathbb{N}^m\}$, where $\mathbb{N}$
is the set of non-negative integers, is 
\begin{equation}
  \mathbf{P}(U=u|AU=b)=\frac{1}{Z_A(b;x)}\frac{x^u}{u!},
  \quad x^u:=\prod_{j\in [m]}x_j^{u_j}, \quad
  u!:=\prod_{j\in[m]}u_j!.
  \label{cond}
\end{equation}
The support of this distribution is determined by
the linear transformation
\begin{equation}
  \mathcal{F}_A(b):=\{v\in\mathbb{N}^m:Av=b\}.
  \label{fiber}
\end{equation}
The set \eqref{fiber} is called the $b$-{\it fiber}
associated to the configuration matrix $A$ with
the sufficient statistics $b$. The normalization constant,
or the partition function
\begin{equation}
  Z_A(b;x):=\sum_{v\in \mathcal{F}_A(b)}\frac{x^v}{v!}
  \label{Apol}
\end{equation}
is called the $A$-{\it hypergeometric polynomial,} or
the {\it GKZ-hypergeometric polynomial,} defined by Gel'fand,
Kapranov, and Zelevinsky in the 1980's. Note that the condition
$(1,\ldots,1)\in{\rm rowspan}(A)$ in Definition~\ref{defi:toric}
means that this polynomial is homogeneous.
We adopt the convention $Z_A(b;x)=0$ if $b\notin\mathbb{N}A$.
It is straightforward to see that a sample consisting of
the Poisson random variables with means $\lambda_j$, $j\in[m]$
with $p_j=\lambda_j/|\lambda|$ also follows the conditional
distribution \eqref{cond}.

Since the discrete exponential family is the standard family
of discrete distributions, sampling from the conditional
distribution \eqref{cond} has various statistical demands.
A typical use in statistics is hypothesis testing.
To evaluate the significance of our data under a hypothetical
model, we generate random variables called statistics from
its distribution and estimate the significance by
the proportion of generated values greater than the observed
value. When evaluating power of a test, we generate random
variables from a distribution under a certain hypothetical
model and estimate the power by the proportion of generated
values that exceed a given significance level. Such
hypothetical models include the non-independence model of
two-way contingency tables and the no-three-way interaction
model for the three-way contingency tables. However,
sampling from such models has also been considered to be
usually impossible.
(see, e.g. \cite[Chapter 9]{Sul18}). This motivates us
to use the Metropolis algorithm, originally
devised for sampling from canonical distributions in
physics and one of the most common tools in the Markov
chain Monte Carlo methods. In the Metropolis algorithm,
the unique stationary distribution of an ergodic
Markov chain is the distribution we need (see, e.g.,
\cite[Chapter 3]{LP17}). We call the chain
{\it Metropolis chain}. Diaconis and Sturmfels \cite{DS98}
proposed to construct a basis of moves in
the Metropolis chain by using Gr\"obner bases of the toric
ideal of a configuration matrix. They called such a basis, or
a generating set for the toric ideal, a {\it Markov basis.}
The state space of the chain is the $b$-fiber \eqref{fiber},
and the ratio of two monomials, which correspond to
the current state and the proposed next state, is computed
in each step of the chain.

However, the Metropolis algorithm has unavoidable
drawbacks in principle. It cannot give independent
samples, and we must wait for the convergence of
the Metropolis chain to the stationary distribution,
whose assessment is a difficult problem to answer.
Therefore, it would be desirable, if possible, to
obtain independent samples directly from the target
distribution.

Importance sampling is an algorithm that draws samples
that follow an easily generated distribution, and by
weighting them based on the ratio to the target distribution,
we regard them as samples drawn from the target
distribution. Importance sampling can obtain independent
samples and has no issue of convergence. Chen et al.~\cite{CDS06}
proposed a sequential importance sampling algorithm
that draws contingency tables sequentially by each cell
by introducing the notion that they called the Markov
subbase. In practice, constructing the distribution to
sample from is non-trivial and requires solving an integer
or linear programming problem for each step.

Moreover, it remains true that importance sampling
is not a direct sampling from the target distribution.
The performance of an importance sampling is greatly
affected by the choice of the weights and
the distribution to sample from. Naturally, the optimal
case is when weights are not necessary when sampled
from the target distribution. Hence, a fundamental
problem remained: is it possible to sample directly
from the target distribution?

The answer is yes; we can sample from the conditional
distribution \eqref{cond} exactly using the holonomic
ideal generated by a system of linear partial differential
equations that the $A$-hypergeometric polynomial satisfies
\cite{Man17} (see Definition~\ref{defi:holo}).
The realizability is guaranteed by Gr\"obner bases of
the holonomic ideal generated by the $A$-hypergeometric
system. Throughout this paper, we call the sampling
algorithm proposed in \cite{Man17}
the {\it direct sampling algorithm}. See the monograph
\cite{Man18} for an extensive discussion.

The direct sampling algorithm is a random walk or a Markov
chain on a bounded integer lattice defined below. One
sample path of the Markov chain is equivalent to one
contingency table. See Example~\ref{exam:2x2:1}.
Note that while the sequential draw of the contingency table
is the same as in Chen et al.~\cite{CDS06}, as Markov
chains the two are quite different and the units of
the direct sampling are individual counts rather than cells.

\begin{definition}[\cite{MT21+}]\label{defi:lattice}
  Consider a matrix $A=(a_{ij})\in\mathbb{Z}^{d\times m}$
  satisfying the conditions in Definition~\ref{defi:toric} and
  a vector $b\in\mathbb{N}A$. The {\it Markov lattice}
  $\mathcal{L}_A(b)$ is the bounded integer lattice embedded
  in the affine semigroup $\mathbb{N}A$ equipped with
  the partial order
  \[
    \beta \in \mathbb{N}A~~\text{and}~~\beta-a_j
    \in \mathbb{N}A 
    \quad \Rightarrow \quad \beta-a_j \prec \beta,
  \]  
  and the maximum and the minimum are $b$ and $0$, respectively.
  Here, $a_j$ denotes the $j$-th column vector of $A$.
\end{definition}

\begin{example}[Two-way contingency tables of the independence model]\label{exam:2x2:1}
  We discuss the $2\times 2$ contingency table, but
  the same argument holds for general two-way
  contingency tables. The $2\times 2$ table is
  \[
  \begin{array}{cc|c}
    u_{11}&u_{12}&u_{1\cdot}\\
    u_{21}&u_{22}&u_{2\cdot}\\
    \hline
    u_{\cdot1}&u_{\cdot2}&|u|
  \end{array},
  \]
  where $u_{i\cdot}=u_{i1}+u_{i2}$ and $u_{\cdot j}=u_{1j}+u_{2j}$,
  $i,j\in[2]$. The configuration matrix, the vectors of
  the observed counts, and the vector of the sufficient
  statistics are
  \begin{equation*}
    A\!=\!\left(\begin{array}{cccc}
      1&1&0&0\\
      0&0&1&1\\
      1&0&1&0\\
      0&1&0&1
    \end{array}\right),\quad
    u\!=\!\left(
    \begin{array}{c}
      u_{11}\\u_{12}\\u_{21}\\u_{22}
    \end{array}
    \right),\quad {\rm and}\quad
    b=\left(
    \begin{array}{c}
      u_{1\cdot}\\u_{2\cdot}\\u_{\cdot1}\\u_{\cdot2}
    \end{array}
    \right),
  \end{equation*}
  respectively. The conditional distribution is
  \[
    \mathbf{P}(U=u|AU=b)
    =\frac{1}{Z_A(b;1)}\frac{1}{u_{11}!u_{12}!u_{21}!u_{22}!}.
  \]
Figure~\ref{fig1} is the Hasse diagram of an example.
\begin{figure}
  \begin{center}
\includegraphics[width=0.5\textwidth]{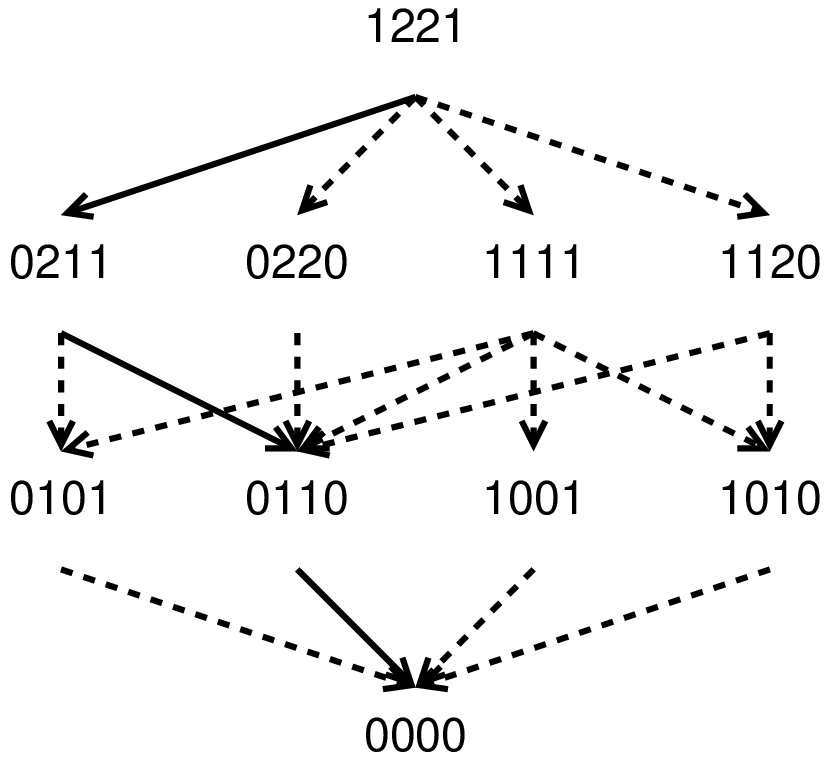}
\caption{The Markov lattice for the matrix
  in Example~\ref{exam:2x2:1} with the maximum $b=(1,2,2,1)^\top$.
  A sample path is shown by solid edges.} 
\label{fig1}
\end{center}
\end{figure}
Transitions occur along the edges. The sample path,
shown with solid edges, is equal to one contingency
table with sufficient statistics $b=(1,2,2,1)^\top$.
In the first line of the following tables, asterisks
mark the cells picked during each solid edge; each
time a cell is picked, the corresponding marginal
count drops by one. The second line shows how
the contingency table is created by listing these
picked counts. The general form of this process is
given in Algorithm~\ref{algo:0}.
\begin{align}
  \begin{array}{cc|c}
   *& &1\\
   & &2\\
  \hline  
  2& 1  
  \end{array}
  &~\quad\to&
  \begin{array}{cc|c}
   & &0\\
    &*&2\\
  \hline
  1& 1  
  \end{array}
  &~\quad\to&
  \begin{array}{cc|c}
   & &0\\
  *& &1\\
  \hline  
  1& 0  
  \end{array}
  &~\quad\to&
  \begin{array}{cc|c}
   & &0\\
    & &0\\
  \hline  
  0& 0  
  \end{array}\nonumber\\
  &
  \begin{array}{cc|c}
  1&0&1\\
  0&0&0\\
  \hline  
  1&0&   
  \end{array}& &
  \begin{array}{cc|c}
  1&0&1\\
  0&1&1\\
  \hline  
  1&1&   
  \end{array}& &
  \begin{array}{cc|c}
  1&0&1\\
  1&1&2\\
  \hline  
  2&1&   
  \end{array}&\label{tables}
\end{align}
\end{example}

As we will see in Section~\ref{sect:algo}, Markov transitions
occur between the adjacent elements of the Markov lattice
with the Markov kernel represented by the transition probability
\begin{equation}
  P(\beta,\beta-a_j;x)
  :=\frac{Z_A(\beta-a_j;x)}{Z_A(\beta;x)}\frac{x_j}{\deg(\beta)},
  \quad j\in[m] \label{kernel}
\end{equation}
for the transition from $\beta$ to $\beta-a_j$, where $\deg(\beta)$
is the degree of $Z_A(\beta;x)$. The computation of
the transition probabilities dominates the computational cost of
the direct sampling algorithm. Evaluation of \eqref{kernel}
following the definition of the $A$-hypergeometric polynomial
\eqref{Apol} is practically impossible since the number of terms
in the polynomial is the cardinality of the fiber \eqref{fiber}
that increases rapidly with the total number of counts.
In Section~4 of \cite{MT21+}, the authors reported some results
of numerical experiments of exact computations with rational
number arithmetic on a computer algebra system.
For two-way contingency tables of the independence model,
the direct sampling algorithm demanded a time of about one-tenth
time of the Metropolis algorithm when we count the effective
sample size of samples taken from the Metropolis chain.
On the other hand, for a model without independence,
the direct sampling algorithm demanded a thousand times
the time the Metropolis algorithm demanded. For the latter
case, although the computation is exact, the computational
burden would prohibit the practical use of the direct
sampling algorithm.

We explain the reason behind this contrast in
the computational costs. For the independence model,
the transition probability \eqref{kernel} reduces to the product
of the proportion of the row marginal count to
the total number of counts and the proportion of
the column marginal count to the total number of counts,
a natural consequence of the independence.
Therefore, \eqref{kernel} reduces to a rational number,
and the computation is immediate. This argument was extended
to decomposable graphical models in Section~3.2 of \cite{MT21+}.
On the other hand, we have to compute \eqref{kernel}
with some effort for a model without independence.
More specifically, we need to compute the
\textit{connection matrices} of the Pfaffian system
of the holonomic ideal (see Section~\ref{subsect:IPS}).
But unfortunately, the computation demands time
growing exponentially with the number of rows or columns.

More generally, the main obstacle common to using
the Markov basis and the direct sampling algorithm
for general models is the computational cost of
Gr\"obner bases. The Gr\"obner basis guarantees
the existence of a Markov basis, however, Buchberger's
algorithm for computing Gr\"obner bases is impractical
since it demands a huge computational cost
(see, e.g., \cite[Section 2.9]{CLO07}). Therefore,
in practice, we have to avoid using Gr\"obner bases.
The challenge is to obtain Markov bases or connection
matrices by some consideration using properties
of each model, rather than resorting to Buchberger's
algorithm. A book-length treatment of this issue for
the Markov bases is \cite{AHT12} and for the direct
sampling algorithm is \cite{Man18}.

In this paper, we show that the transition probability
\eqref{kernel} is the ratio of the {\it uniformly minimum
variance unbiased estimator} (UMVUE) of the expected count
to the total number of counts, where for a vector of
observed counts $u\in\mathbb{N}^m$, we call $\mu:=\mathbf{E}U$
the vector of {\it expected counts}. In the final section
of Chapter~1, entitled ``Exact Methods,''
Haberman~\cite{Hab74} argued that the UMVUE was impractical
for two main reasons: first, the UMVUE generally does not
lie in the model manifold $\mathcal{M}_A$ (the toric
variety defined by the matrix $A$); second, it is
computationally intensive. In response to this, in
the final paragraph of the chapter, she concluded,
``Since exact methods are usually impractical, approximate
methods must be tried. One such method, that of maximum
likelihood, provides estimation and test procedures which
involve simple computational methods and which have large
sample properties. This method is the subject of
the remainder of the monograph.'' In this context,
the significance of the UMVUE seems to have been overlooked,
and the discussion of the MLE has dominated the literature.

In line with this trend, special MLEs have been attracting
attention in recent years. Discrete statistical models for
which the MLE admits an expression of a rational function
of the observed counts are called {\it rational}, and they
were characterized by \cite{Huh14} and \cite{DMS21}.
Such an expression of the MLE is called
the {\it rational MLE}. The MLE of the expected count
of decomposable graphical models is given by a rational
function of the observed counts
(see, e.g., \cite[Section 4.4]{Lau94}), and it was shown
that an undirected discrete
graphical model is rational if and only if it is a decomposable
graphical model \cite{GMS06}. In this paper, we will see that
the UMVUE and the MLE of an undirected discrete
graphical model coincide if and only if it is a decomposable
graphical model.

We restate the observation above about the direct sampling
algorithm as follows: if a model is log-linear and rational,
then the computation of the transition probability
\eqref{kernel} is immediate, and the direct sampling algorithm
works efficiently and exactly. Otherwise, we may
replace the UMVUE with the MLE and run the algorithm
using existing methods to compute the MLE, such as
the iterative proportional scaling algorithm,
with the expense of bias of the MLE, that asymptotically
disappears (See Remark~\ref{rema:bias}).

This paper is structured as follows. In
Section~\ref{sect:algo}, we introduce the details of
the direct sampling algorithm proposed in \cite{Man17}
and show that the transition probability is the ratio
of the UMVUE of the expected count to the total number
of counts. Then, after summarizing the relevant properties
of the MLE, we show that the UMVUE and the MLE
of an undirected discrete graphical model
coincide if and only if is a decomposable graphical model
and the resulting MLE is rational. Lastly, we discuss
the use of the MLE and reveal the key how we avoid computation
of the connection matrices of the holonomic ideal.
Section~\ref{sect:exp} is devoted to numerical experiments.
Using several contingency tables as examples, we demonstrate
how the direct sampling algorithm works.
Section~\ref{sect:disc} summarizes the significance of
this paper.

To the best of author's knowledge, this is the first
paper to demonstrate direct sampling from
the conditional distribution of the discrete exponential
family in a feasible time, although it is generally
approximate.

\section{Algorithm}
\label{sect:algo}

The direct sampling algorithm discussed in this paper
is as follows.

\begin{algorithm}\cite{Man17}\label{algo:0}~
  \begin{itemize}
  \item[] Input: A matrix $A\in\mathbb{Z}^{d\times m}$
    satisfying the conditions in Definition~\ref{defi:toric},
    vectors $x\in\mathbb{R}_{>0}^m$, $b\in\mathbb{N}A$ with
    the total number of counts $n$.
  \item[] Output: A vector of counts $u\in\mathbb{N}^m$ following
    the conditional distribution \eqref{cond} given $b$.
  \end{itemize}
  \begin{itemize}
  \item [] Step 1: Set $\beta=b$.

    \smallskip
    
    {\bf For} $t=1,2,\ldots,n$ {\bf do}

  \item [] ~~ Step 2: Compute $P(\beta,\beta-a_j;x)$ in \eqref{kernel}
    for each $j\in[m]$.

  \item [] ~~ Step 3: Pick $j_t=j$ with probability
    $P(\beta,\beta-a_j;x)$, $j\in[m]$.

  \item [] ~~ Step 4: Set $\beta\leftarrow \beta-a_{j_t}$.

    \smallskip
    
    {\bf End for}

  \item [] Step 5: Output $u_j:=\#\{t\in[n]:j_t=j\}$, $j\in[m]$.
  \end{itemize}
\end{algorithm}

In Input $b$ and $n$ should be consistent:
since $(1,\ldots,1)\in{\rm rowspan}(A)$, we can always
have the vector $c\in\mathbb{Q}^d$ with respect to $A$
such that $n=\sum_{i\in[d]}c_ib_i$.

Our concern is how to achieve Step~2, since the cost of
this step dominates the computational cost of
Algorithm~\ref{algo:0}. In the paper \cite{Man17},
the author obtained the connection matrices of the holonomic
ideal for the case of $d=2$ and discussed computing
$A$-hypergeometric polynomials to appear in advance.
In the paper \cite{MT21+}, the authors discussed computing
$A$-hypergeometric polynomials needed along a sample path
of the Markov chain for general cases. Both implementations
give exact results, but the latter is general but not
efficient.

\begin{remark}\label{rema:hit}
A merit of Algorithm~\ref{algo:0} is that we do not have to
construct the Markov lattice $\mathcal{L}_A(b)$. A Markov
transition occurs only between adjacent elements because 
otherwise, $Z_A(\beta-a_j;x)=0$ since $\beta-a_j\not\in \mathbb{N}A$.
For the same reason, a sample path never hits outside of
$\mathcal{L}_A(b)$, or more precisely, the set
$({\rm Cone}(A)\cap\mathbb{Z}^m)\setminus\mathbb{N}A$,
where ${\rm Cone}(A):=\{Av:v\in\mathbb{R}^m_{\ge 0}\}$
is the polyhedral cone generated by the column vectors of $A$.
\end{remark}

\subsection{Markov transitions and the UMVUE}
\label{subsect:UMVUE}
  
We recall some basic properties of the $A$-hypergeometric
polynomial \eqref{Apol}. A book-length treatment of
the $A$-hypergeometric functions is \cite{SST00}.

\begin{definition}\label{defi:holo}
  For a matrix $A=(a_{ij})\in \mathbb{Z}^{d\times m}$ and
  a vector $\beta\in\mathbb{C}^d$, the
  $A$-{\it hypergeometric system} is the following system of
  linear partial differential equations for an indeterminate
  function $f=f(x)$:
  \begin{align}
    &(\sum_{j\in[m]}a_{ij}x_j\partial_j-\beta_i)f=0,
    \quad  i\in[d], \quad
    \partial_j:=\frac{\partial}{\partial x_j}, \quad
    \text{and}
    \label{HA1}\\
    &(\partial^{z^+}-\partial^{z^-}) f=0,\quad
    z\in{\rm Ker} A\cap {\mathbb{Z}}^m, \quad
    \partial^z:=\prod_{j\in[m]}\partial_j^{z_j},
    \label{HA2}
  \end{align}
  where $z^+_j=\max\{z_j,0\}$ and $z^-_j=-\min\{z_j,0\}$.
  The second group of operators in \eqref{HA2} generates
  the {\it toric ideal} of $A$.
\end{definition}

We know that the $A$-hypergeometric polynomial with
Definition~\ref{defi:toric} satisfies the homogeneity:
\begin{equation}
  (\sum_{j\in[m]}x_j\partial_j-\deg(\beta))Z_A(\beta;x)=0,
  \label{homo}
\end{equation}
and a simple computation shows the $A$-hypergeometric
polynomial satisfies the {\it contiguity relation}:
\begin{equation}
  \partial_jZ_{A}(\beta;x)=Z_{A}(\beta-a_j;x), \quad
  j\in[m].
  \label{cont}
\end{equation}

These properties give the following lemma.
In what follows, we sometimes denote the vector of
the UMVUE $\tilde{\mu}$ by $\tilde{\mu}(b;x)$ to explicitly
show the dependence on sufficient statistics $b$ and $x$.
We apply this rule to the expected counts $\mu$ and
the MLE $\hat{\mu}$ discussed below.

\begin{lemma}\label{lemm:UMVUE}
  The UMVUE $\tilde{\mu}$ of the vector of expected
  counts $\mu$ in the log-affine model $\mathcal{M}_A$ is
  given as
  \begin{equation}\label{UMVUE}
  \tilde{\mu}_j(b;x)=\frac{Z_A(b-a_j;x)}{Z_A(b;x)}x_j,
  \quad j\in[m],
  \end{equation}
  for $b\in\mathbb{N}A$, where $Z_A(b;x)$ is
  the $A$-hypergeometric polynomial defined by \eqref{Apol}.
  Moreover, it is the unique unbiased estimator that is
  a function of $b$.
\end{lemma}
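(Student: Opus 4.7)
The plan is to invoke the Lehmann-Scheff\'e theorem, which reduces the lemma to two independent verifications: that $b=AU$ is complete sufficient for the parameter $t$, and that the candidate $\tilde{\mu}_j(b;x)$ is unbiased for $\mu_j$.

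For completeness-sufficiency, the factorization displayed in \eqref{prob} already exhibits $b$ as sufficient. Replacing the multinomial sampling by the Poisson embedding mentioned in the paragraph following \eqref{Apol}---so that $U_j\sim{\rm Poisson}(\lambda_j)$ with $\lambda_j=x_j\prod_{i}t_i^{a_{ij}}$ and $\mu_j=\lambda_j$---the model becomes a full-rank exponential family in the natural parameters $(\log t_1,\ldots,\log t_d)\in\mathbb{R}^d$. Since this parameter space contains an open set, completeness of $b$ follows from the standard completeness theorem for discrete exponential families. Consequently, by Lehmann-Scheff\'e, any $b$-measurable unbiased estimator of $\mu_j$ is automatically the UMVUE, and two such estimators must coincide almost surely; this immediately delivers the uniqueness clause.

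The remaining content is the unbiasedness of the stated $\tilde{\mu}_j$. I would first compute the marginal law of $b$: summing the joint Poisson density over the fiber $\mathcal{F}_A(b)$ and using the fact that $\prod_j\lambda_j^{v_j}$ depends on $v$ only through $Av$, together with the definition \eqref{Apol}, gives $\mathbf{P}(AU=b)=e^{-|\lambda|}\bigl(\prod_{i\in[d]}t_i^{b_i}\bigr)Z_A(b;x)$. Plugging $\tilde{\mu}_j(b;x)=x_jZ_A(b-a_j;x)/Z_A(b;x)$ into $\sum_b\tilde{\mu}_j(b;x)\mathbf{P}(AU=b)$, the factors $Z_A(b;x)$ in numerator and denominator cancel, and a shift of summation index $b\mapsto b+a_j$ peels off $\prod_i t_i^{a_{ij}}=\lambda_j/x_j$. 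The remaining sum $\sum_{b'}Z_A(b';x)\prod_i t_i^{b'_i}$ reassembles, by the same manipulation used to derive the marginal, into $e^{|\lambda|}$, which cancels $e^{-|\lambda|}$ and leaves $\lambda_j=\mu_j$. A one-line alternative uses the contiguity relation \eqref{cont} to write $\tilde{\mu}_j=x_j\,\partial_j\log Z_A(b;x)$, after which the expectation collapses by term-by-term differentiation of the generating identity $\sum_b Z_A(b;x)\prod_i t_i^{b_i}=e^{|\lambda|}$.

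The main obstacle is purely cosmetic: choosing the cleanest parameterization. The Poisson embedding is preferable because $\mu_j=\lambda_j$ is not entangled with a fixed sample size; the multinomial formulation works as well but requires either conditioning on $n=|u|$ (which is itself a function of $b$ since $(1,\ldots,1)\in{\rm rowspan}(A)$) or tracking $n$ separately. Once the framework is fixed, both completeness and the unbiasedness computation are routine, with the contiguity relation making the marginal-to-expectation cancellation essentially automatic.
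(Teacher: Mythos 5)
Your proposal is correct, and it reaches \eqref{UMVUE} by a genuinely different route than the paper. The paper's proof is a Rao--Blackwell construction: it computes $\mathbf{E}(U_j\mid AU=b)$ directly from the conditional distribution \eqref{cond}, obtaining $\frac{1}{Z_A(b;x)}\sum_{v\in\mathcal{F}_A(b)}v_j\frac{x^v}{v!}=\frac{x_j\partial_jZ_A(b;x)}{Z_A(b;x)}=\frac{x_jZ_A(b-a_j;x)}{Z_A(b;x)}$ via the contiguity relation \eqref{cont}, so the formula and its unbiasedness arrive together (the latter by the tower property $\mathbf{E}[\mathbf{E}(U_j\mid AU=b)]=\mathbf{E}[U_j]=\mu_j$), and Lehmann--Scheff\'e closes the argument exactly as in your plan. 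You instead treat \eqref{UMVUE} as a given candidate and verify unbiasedness unconditionally: you compute the marginal law $\mathbf{P}(AU=b)=e^{-|\lambda|}\bigl(\prod_{i}t_i^{b_i}\bigr)Z_A(b;x)$ under the Poisson embedding and shift the summation index $b\mapsto b+a_j$, a step that is legitimate precisely because of the paper's convention $Z_A(\beta;x)=0$ for $\beta\notin\mathbb{N}A$; your ``one-line alternative'' through the generating identity $\sum_{b}Z_A(b;x)\prod_{i}t_i^{b_i}=e^{|\lambda|}$ is, in substance, the same derivative identity that the paper packages as the contiguity relation. What each buys: the paper's conditioning argument is shorter and applies uniformly under multinomial and Poisson sampling without any marginal computation, since only the conditional law \eqref{cond} enters; your version makes the completeness step explicit (the paper merely cites Lehmann--Casella) and correctly disposes of the multinomial case via the observation that $|u|$ is itself a function of $b$ because $(1,\ldots,1)\in{\rm rowspan}(A)$.

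One small repair: your assertion that the Poisson family is full rank in the natural parameters $(\log t_1,\ldots,\log t_d)$ can fail, since Definition~\ref{defi:toric} does not force the rows of $A$ to be linearly independent --- in Example~\ref{exam:2x2:1} the $4\times 4$ configuration matrix has rank $3$. Completeness of $b$ nevertheless holds: either pass to a minimal exponential representation (every function of $b$ is a function of the minimal statistic, and conversely on the support), or note directly that $\mathbf{E}[g(AU)]=e^{-|\lambda|}\sum_{b\in\mathbb{N}A}g(b)Z_A(b;x)\prod_{i}t_i^{b_i}$ vanishing for all $t\in\mathbb{R}^d_{>0}$ forces each coefficient $g(b)Z_A(b;x)$ to vanish, and $Z_A(b;x)>0$ for $b\in\mathbb{N}A$. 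This is a wording fix, not a gap in the logic.
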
  

\begin{proof}
  By the conditional distribution \eqref{cond}, we observe
  that
  \[
  \mathbf{E}(U_j|AU=b)=\frac{1}{Z_A(b;x)}
  \sum_{v\in\mathcal{F}_A(b)} v_j\frac{x^v}{v!}
  =\frac{\partial_j Z_A(b;x)}{Z_A(b;x)}x_j=
  \frac{Z_A(b-a_j;x)}{Z_A(b;x)}x_j, \quad j\in[m],
  \]
  where the last equality follows by the contiguity relation
  \eqref{cont}. This is the unbiased estimator of $\mu_j$ because
  $\mathbf{E}[\mathbf{E}(U_j|AU=b)]=\mathbf{E}[U_j]=\mu_j$.
  Since $b$ is the vector of complete sufficient statistics,
  by the Lehmann-Scheff\'e theorem \cite[Theorem 2.1.11]{LC98},
  it is the unique unbiased estimator that is a function of
  $b$.
\end{proof}

By the homogeneity \eqref{homo}, we have
\[
  |u|=\sum_{j\in[m]}\tilde{\mu}_j(b;x)=
  \sum_{j\in[m]}\frac{x_j\partial_jZ_A(b;x)}{Z_A(b;x)}
  ={\deg}(b).
\]
Hence, we conclude that the transition probability
\eqref{kernel} is the ratio of the UMVUE of the expected
counts to the total number of counts. 

\subsection{The MLE}

We introduce the MLE of the vector of expected counts.
The probability law \eqref{prob} leads to the log-likelihood
\[
  l(\xi;b)=\sum_{i\in[d]} b_i\xi_i-\psi(\xi), \quad
  \psi(\xi):=|u|\log Z(e^\xi,x)+{\rm const.},
\]
where $\xi_i=\log t_i$, $i\in[d]$ are the canonical parameters
of the exponential family. The expected counts $\mu_j$, $j\in[m]$
satisfy
\[
  \sum_{j\in[m]} a_{ij}\mu_j
  :=\sum_{j\in[m]} a_{ij}\mathbf{E}[U_j]
  =\mathbf{E}[B_i]=\frac{\partial\psi}{\partial\xi_i},
\quad i\in[d],
\]
where the last equality holds by
$\mathbf{E}[\partial l/\partial\xi_i]=0$.
The MLEs of the canonical parameters $\hat{\xi}_i$, $i\in[d]$
constitute the unique solution of the estimating equations
\[
  \frac{\partial l(\xi;b)}{\partial\xi_i}=0, \quad
  \text{or} \quad b_i=\frac{\partial\psi}{\partial\xi_i},
  \quad i\in[d].
\]
Therefore, we have
\[
  \sum_{j\in[m]}a_{ij}\hat{\mu}_j=
  \left.\frac{\partial\psi}{\partial\xi_i}\right|_{\xi=\hat{\xi}}=
  b_i, \quad i\in[d].
\]
More precisely, we have the following theorem.

\begin{theorem}[{\cite[Theorem 4.8]{Lau94}}]\label{theo:MLE}
  The MLE $\hat{\mu}$ of the vector of expected counts
  $\mu$ in the log-affine model $\mathcal{M}_A$ satisfies
  \begin{equation}\label{est}
    A\hat{\mu}=b
  \end{equation}
  for a sample of the vector of sufficient statistics $b$.
  Moreover, such MLE is unique. This holds under multinomial
  as well as Poisson sampling.
\end{theorem}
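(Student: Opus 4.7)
My plan is to follow the standard exponential family recipe, with the extra care needed to handle the closure issue raised in the paper's definition of $\mathcal{M}_A$.

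First, I would establish the score equation on the interior. The log-likelihood $l(\xi;b) = \sum_{i\in[d]} b_i \xi_i - \psi(\xi)$ derived just before the theorem already identifies $\psi$ as (a constant plus) $|u|\log Z(e^\xi,x)$. A direct differentiation, together with the identity $\partial\psi/\partial\xi_i = \mathbf{E}[B_i] = \sum_{j\in[m]} a_{ij}\mu_j(\xi)$ (which comes from $\mathbf{E}[\partial l/\partial\xi_i]=0$), gives that any interior critical point must satisfy $\sum_j a_{ij}\hat{\mu}_j = b_i$ for all $i$, i.e., $A\hat{\mu}=b$. This is exactly the computation sketched in the paragraph above the theorem; I would just check the interchange of differentiation and summation in $Z(e^\xi,x)$, which is routine because $Z$ is analytic on the natural parameter space.

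Next, I would handle existence and uniqueness by convexity. The cumulant function $\psi$ is strictly convex in $\xi$ on the natural parameter domain (the Hessian is the covariance matrix of $B=AU$, which is positive definite under the standing assumption that no row of $A$ is zero and $A$ has full row rank modulo $\mathrm{rowspan}$ redundancy). Consequently $l(\xi;b)$ is strictly concave, so at most one $\hat{\xi}$ can maximize it, and hence at most one $\hat{\mu}=\mathbf{E}_{\hat\xi}[U]$ in the mean-value parametrization. Equivalently, the map $\xi \mapsto \mu(\xi)$ restricted to $\mathcal{M}_A$ is a bijection onto the relative interior of the convex hull $\{\mu\in\mathbb{R}^m_{\ge 0}: A\mu = b',\, b'\in\mathrm{int}\,\mathbb{N}A\}$ scaled by $|u|$, so $A\hat{\mu}=b$ has a unique solution inside $\mathcal{M}_A$.

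The step that needs the most care, and where I expect the main obstacle, is existence when $b$ lies on the boundary of $\mathbb{N}A$ (when the data vector does not satisfy the interior positivity constraints). This is precisely why the paper adopts the closure $\mathcal{M}_A=\mathrm{cl}\{\cdot\}$ in Definition~\ref{defi:toric}. I would argue by taking a sequence $\xi^{(k)}$ along which $l(\xi^{(k)};b)$ approaches the supremum; the corresponding expected-count vectors $\mu^{(k)}$ live in the compact set $\{\mu\ge 0: |\mu|=|u|\}$, so a subsequence converges to some $\hat\mu$ in $\mathrm{cl}\,\mathcal{M}_A$. Passing $A\mu^{(k)}=b$ to the limit (the left side is linear and continuous in $\mu$) yields $A\hat\mu=b$, and strict concavity on the effective face containing $\hat\mu$ again gives uniqueness. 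Uniqueness of $\hat\mu$ in the closure is the subtle part: one must invoke the characterization (Barndorff-Nielsen / Lauritzen) that the MLE corresponds to the unique face of $\mathrm{cl}\,\mathcal{M}_A$ whose relative interior contains a point with $A\mu=b$.

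Finally, to cover both sampling schemes simultaneously, I would use the standard reduction: for independent Poissons with means $\lambda_j = |u|\cdot p_j$, the joint likelihood factors as the product of the marginal distribution of $|U|$ (Poisson with mean $|\lambda|$) and the conditional distribution of $U$ given $|U|$ (multinomial with probabilities $p_j$). Since $(1,\ldots,1)\in\mathrm{rowspan}(A)$, the constraint $|U|=n$ is absorbed into $AU=b$, so the Poisson score equation coincides with the multinomial one and produces the same $\hat\mu$ satisfying $A\hat\mu=b$.
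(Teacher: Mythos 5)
Your interior argument (the score equation via $\partial\psi/\partial\xi_i=\mathbf{E}[B_i]$) and the Poisson--multinomial reduction are sound and match the derivation the paper sketches just before Theorem~\ref{theo:MLE}; note the paper itself offers no proof of this statement, deferring entirely to Lauritzen. Before the main issue, one fixable inaccuracy: under multinomial sampling $\psi$ is \emph{not} strictly convex in $\xi$. Since $(1,\ldots,1)\in\mathrm{rowspan}(A)$, there is $c\neq 0$ with $c^\top A=(1,\ldots,1)$, so $c^\top B=|U|=n$ is degenerate and the Hessian $\mathrm{Cov}(B)$ is singular; $\hat\xi$ is never unique, only $\hat\mu$ is. Your hedge ``modulo rowspan redundancy'' gestures at this, but as written ``positive definite'' is false, and the argument must be run on the quotient, observing that $\mu(\xi)$ depends only on the equivalence class.

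The genuine gap is in the boundary step. You take a maximizing sequence $\xi^{(k)}$ and then ``pass $A\mu^{(k)}=b$ to the limit,'' but that identity does not hold along a maximizing sequence: it is the first-order condition at a critical point, and in the boundary case there is no interior critical point. In fact it \emph{cannot} hold: for interior $\xi^{(k)}$ every component of $\mu^{(k)}=\mathbf{E}_{\xi^{(k)}}U$ is strictly positive, whereas when $b$ lies on a proper face of $\mathrm{Cone}(A)$ every nonnegative solution of $A\mu=b$ has vanishing components (the supporting-hyperplane argument used in the proof of Proposition~\ref{prop:bound}). So in exactly the case this paragraph must handle, no term of your sequence satisfies the equation you claim to pass to the limit; what is available is only the weaker $A\mu^{(k)}\to b$, which itself requires proof. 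Your closing appeal to ``the characterization (Barndorff-Nielsen / Lauritzen)'' is then circular, since that characterization \emph{is} the theorem being proved. A correct repair works in the mean parametrization: $L(p)=\sum_j u_j\log p_j$ is upper semicontinuous on the compact set $\mathrm{cl}\,\mathcal{M}_A$, hence attains a maximum at some $\hat p$ with $\hat p_j>0$ whenever $u_j>0$; the closure is invariant under the tiltings $p_j\mapsto p_je^{\epsilon(A^\top c)_j}/\sum_k p_ke^{\epsilon(A^\top c)_k}$, and differentiating $L$ along these curves at $\hat p$ gives $c^\top(b-|u|A\hat p)=0$ for all $c$, i.e.\ \eqref{est}; uniqueness then follows from injectivity of the moment map $\mu\mapsto A\mu$ on $\mathrm{cl}\,\mathcal{M}_A$ (the extended Birch theorem), not from concavity over the toric set, which is not convex.
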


Some components of the MLE become zero if sufficient
statistics lie in the boundary of ${\rm Cone}(A)$.
Related to Definition~\ref{defi:lattice}, we have
the following proposition.

\begin{proposition}\label{prop:bound}
  Consider a matrix $A=(a_{ij})\in\mathbb{Z}^{d\times m}$
  satisfying the conditions in Definition~\ref{defi:toric}
  and a vector of sufficient statistics $b\in\mathbb{N}A$.
  If $b-a_j\not\in\mathbb{N}A$ for some $j$, then
  the component of the MLE $\hat{\mu}_j(b;x)$ vanishes. 
\end{proposition}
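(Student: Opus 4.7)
I would prove the contrapositive: assuming $\hat{\mu}_j(b;x) > 0$, derive $b - a_j \in \mathbb{N}A$. The argument splits into a conic step, using the face structure of $\mathrm{Cone}(A)$, and an integral step, upgrading a real witness to an integer one.

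For the conic step, let $F_b$ denote the minimal face of $\mathrm{Cone}(A)$ containing $b$, so that $b$ lies in the relative interior of $F_b$. By Theorem~\ref{theo:MLE}, $A\hat{\mu} = b$ with $\hat{\mu} \in \mathbb{R}^m_{\geq 0}$. The defining property of a face of a convex cone---if $y + z \in F_b$ with $y, z \in \mathrm{Cone}(A)$, then both $y$ and $z$ lie in $F_b$---applied to the decomposition $b = \sum_{k \in [m]} \hat{\mu}_k a_k$ forces $a_k \in F_b$ for every $k$ with $\hat{\mu}_k > 0$. In particular, $\hat{\mu}_j > 0$ implies $a_j \in F_b$.

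For the integral step, I would produce a fiber element $v \in \mathcal{F}_A(b)$ with $v_j \geq 1$; then $v - e_j \in \mathbb{N}^m$ certifies $b - a_j = A(v - e_j) \in \mathbb{N}A$. Starting from any $v^{(0)} \in \mathcal{F}_A(b)$ (nonempty since $b \in \mathbb{N}A$), I would apply integer moves in $\mathrm{Ker}\,A \cap \mathbb{Z}^m$ (which preserve the fiber) to raise the $j$-th coordinate up to at least $1$, using that $a_j \in F_b$ to guarantee such a move is available; alternatively, one might invoke the connectedness of the fiber under a Markov basis for the toric ideal of $A$ to migrate weight onto the $j$-th coordinate.

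The integral step is where I expect the main obstacle: the real positivity $\hat{\mu}_j > 0$ only guarantees that $a_j$ participates in some conic decomposition of $b$, and lifting this to an integer representation with $v_j \geq 1$ is not automatic. The value $\hat{\mu}_j$ may be less than $1$, and $\mathbb{N}A$ can have gaps relative to $\mathrm{Cone}(A) \cap \mathbb{Z}A$, so a careful use of properties of the toric ideal---such as a Markov basis connecting fiber elements---or an appeal to a saturation hypothesis on $\mathbb{N}A$ appears to be required to close the argument.
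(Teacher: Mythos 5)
Your conic step is correct and is the rigorous kernel of the matter: by Theorem~\ref{theo:MLE} we have $A\hat{\mu}=b$ with $\hat{\mu}\in\mathbb{R}^m_{\ge0}$, and the face property of the minimal face $F_b$ indeed forces $a_k\in F_b$ for every $k$ with $\hat{\mu}_k>0$. But the integral step that you flag as the main obstacle is not merely hard---it is impossible, because the contrapositive you are aiming for is false at this level of generality. Take the rational normal curve configuration $A=\left(\begin{smallmatrix}1&1&1\\0&1&2\end{smallmatrix}\right)$ (the model of \cite{Man17}) with $b=(2,1)^\top$ and $x=(1,2,1)$, i.e., $n=2$ observations from the binomial model with two trials. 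The fiber $\mathcal{F}_A(b)$ is the single point $(1,1,0)^\top$, so $b-a_3=(1,-1)^\top\notin\mathbb{N}A$; yet $b$ lies in the interior of ${\rm Cone}(A)$, so $a_3\in F_b$, and the MLE is $\hat{\theta}=1/4$ with $\hat{\mu}_3(b;x)=n\hat{\theta}^2=1/8>0$. Note that this semigroup is saturated, $\mathbb{N}A={\rm Cone}(A)\cap\mathbb{Z}^2$, so the saturation hypothesis you propose would not close the gap either: the failure mode here is not a hole in $\mathbb{N}A$ but the fact that $b-a_j$ can exit the cone altogether when $\deg(b)$ is small, even though $a_j$ lies in the minimal face of $b$. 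And since the fiber is a singleton with third coordinate zero, no Markov-basis migration can produce $v_j\ge1$; there is nothing to migrate.

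The comparison with the paper's own proof is instructive. The paper argues directly: every integer representation $b=\sum_k v_ka_k$ has $v_j=0$ (immediate from $b-a_j\notin\mathbb{N}A$), hence $b\in\{Av:v\in\mathbb{R}^m_{\ge0},\,v_j=0\}$, which the proof identifies with a face of ${\rm Cone}(A)$, whence $A\hat{\mu}=b$ forces $\hat{\mu}_j=0$. This is exactly the integer-to-real bridge you declined to cross, traversed in the opposite direction: the subcone generated by $\{a_k:k\ne j\}$ need not be a face of ${\rm Cone}(A)$, and membership of $b$ in it does not constrain the support of real nonnegative solutions of $Aw=b$---the example above witnesses both failures at once. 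So your two-step decomposition correctly isolates the weak point that the paper's two-line argument glosses over; the statement is sound precisely when $b-a_j\notin\mathbb{N}A$ implies $a_j\notin F_b$ (as one checks directly for the independence model, where a vanishing transition probability corresponds to a vanishing marginal count), but neither your route nor the paper's establishes it for an arbitrary configuration satisfying Definition~\ref{defi:toric}, and the gap propagates to the lattice-confinement claim of Proposition~\ref{prop:lattice}.
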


\begin{proof}
  Since $b\in\mathbb{N}A$ and $b-a_j\not\in\mathbb{N}A$
  for some $j$, if we write $b=\sum_{k\in[m]}v_ka_k$ for
  some $v\in\mathbb{N}^m$ (such $v$ is generally not unique),
  then $v_j=0$. This fact means that $b$ lies in a face
  of ${\rm Cone}(A)$, or
  \[
  b\in\{Av:v\in\mathbb{N}^m,v_j=0\}
  \subset\{Av:v\in\mathbb{R}^m_{\ge 0},v_j=0\}.
  \]
  On the other hand, $\hat{\mu}$ satisfies \eqref{est},
  which implies $\hat{\mu}_j(b;x)=0$.
\end{proof}  

In Section~\ref{subsect:IPS}, we propose to rewrite
Steps~2 and 3 as follows:
\begin{itemize}
\item [] Step~2': Compute the MLE of the expected count
  $\hat{\mu}_j(\beta;x)$ for each $j\in[m]$ of the 
  log-affine model $\mathcal{M}_A$ with the sufficient
  statistics $\beta$;
\item [] Step 3': Pick $j_t=j$ with probability
  $\hat{\mu}_j(\beta;x)/(n-t+1)$, $j\in[m]$,
\end{itemize}
and discuss the implementation.

Before closing this subsection, we mention an observation about
the UMVUE. Note that the following proposition does not mean
that the UMVUE always coincides with the MLE since the UMVUE
generally does not lie in the model manifold $\mathcal{M}_A$.
If it lies in $\mathcal{M}_A$, it should coincide
with the MLE by Theorem~\ref{theo:MLE}.
We will be back to this issue in Section~\ref{subsect:MLEUM}.

\begin{proposition}\label{prop:0}
  The UMVUE $\tilde{\mu}$ of the vector of expected counts
  $\mu$ in the log-affine model $\mathcal{M}_A$ satisfies
  the estimating equation of the MLE \eqref{est}, that is,
  \begin{equation}\label{est2}
    A\tilde{\mu}=b
  \end{equation}
  for a sample of the vector of sufficient statistics $b$.
\end{proposition}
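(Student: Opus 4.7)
The plan is to exploit the representation of the UMVUE as a conditional expectation. From the derivation in Lemma~\ref{lemm:UMVUE}, the UMVUE equals $\tilde{\mu}_j(b;x)=\mathbf{E}(U_j\mid AU=b)$. Linearity of conditional expectation, combined with the tautology that $(AU)_i$ is identically $b_i$ on the conditioning event $\{AU=b\}$, then yields
\[
\sum_{j\in[m]} a_{ij}\tilde{\mu}_j
= \mathbf{E}\Bigl(\sum_{j\in[m]} a_{ij}U_j \,\Big|\, AU=b\Bigr)
= \mathbf{E}\bigl((AU)_i\,\big|\,AU=b\bigr)
= b_i,
\]
for every $i\in[d]$, which is exactly \eqref{est2}.

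An equivalent purely analytic route is available and perhaps more in the spirit of the preceding subsection: substitute the explicit formula \eqref{UMVUE}, rewritten as $\tilde{\mu}_j(b;x)=x_j\,\partial_j Z_A(b;x)/Z_A(b;x)$, into the left-hand side of \eqref{est2}. This gives
\[
\sum_{j\in[m]} a_{ij}\tilde{\mu}_j(b;x)
= \frac{1}{Z_A(b;x)}\sum_{j\in[m]} a_{ij}\,x_j\,\partial_j Z_A(b;x),
\]
and the first group \eqref{HA1} of operators in the $A$-hypergeometric system, which $Z_A(\beta;x)$ satisfies by construction with parameter $\beta=b$, supplies the Euler-type relation $\sum_{j} a_{ij} x_j\partial_j Z_A(b;x)=b_i Z_A(b;x)$; cancellation then closes the computation.

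I do not expect a genuine obstacle: both arguments are one-liners once Lemma~\ref{lemm:UMVUE} is in hand. The conceptual point worth flagging, as the authors already do in the paragraph preceding the statement, is that \eqref{est2} does \emph{not} collapse the distinction between the UMVUE and the MLE — Theorem~\ref{theo:MLE} characterizes $\hat{\mu}$ as the unique solution of $A\mu=b$ \emph{inside} the model manifold $\mathcal{M}_A$, whereas $\tilde{\mu}$, while satisfying the same linear constraint, can lie outside $\mathcal{M}_A$. Coincidence of the two estimators therefore requires the additional condition $\tilde{\mu}\in\mathcal{M}_A$, which is the topic taken up in Section~\ref{subsect:MLEUM}.
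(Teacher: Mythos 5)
Your proposal is correct, and it actually contains two complete proofs. Your second, analytic argument is essentially the paper's own proof: the paper applies the annihilators \eqref{HA1} to $Z_A(b;x)$ and uses the contiguity relation \eqref{cont} to rewrite $\partial_j Z_A(b;x)$ as $Z_A(b-a_j;x)$, arriving at $\sum_{j\in[m]} a_{ij}x_j Z_A(b-a_j;x)=b_i Z_A(b;x)$, which is \eqref{est2} after dividing by $Z_A(b;x)$ and invoking \eqref{UMVUE} --- exactly your cancellation step. Your first, probabilistic argument is a genuinely different and more elementary route: once Lemma~\ref{lemm:UMVUE} identifies $\tilde{\mu}_j(b;x)=\mathbf{E}(U_j\mid AU=b)$, linearity of conditional expectation together with the fact that $(AU)_i\equiv b_i$ on the conditioning event (equivalently, $Av=b$ for every $v$ in the fiber supporting \eqref{cond}) gives $A\tilde{\mu}=b$ with no reference to differential operators at all. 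The two are of course the same computation in disguise --- both unwind to $\sum_{v}(Av)_i\,x^v/v! = b_i Z_A(b;x)$ --- but the packaging matters: your conditional-expectation version makes transparent that \eqref{est2} is a tautology about Rao--Blackwellized estimators, valid under either multinomial or Poisson sampling and independent of the $A$-hypergeometric machinery, while the paper's version stays inside the holonomic framework (annihilators and contiguity) that Section~\ref{subsect:IPS} later exploits, where this identity reappears as the decoupled first row of the recurrence \eqref{rec2}. Your closing caveat is also exactly the paper's: \eqref{est2} does not force $\tilde{\mu}=\hat{\mu}$, since the uniqueness in Theorem~\ref{theo:MLE} holds only within $\mathcal{M}_A$, which the UMVUE may fail to inhabit.
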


\begin{proof}
  Applying the annihilator \eqref{HA1} to
  the $A$-hypergeometric polynomial $Z_A(b;x)$
  and using the contiguity relation \eqref{cont}, we have
  \[
    \sum_{j\in[m]} a_{ij}x_jZ_A(b-a_j;x)
    =b_i Z_A(b;x), \quad i\in[d],
  \]
  which is equivalent to \eqref{est2} due to 
  the expression of the UMVUE \eqref{UMVUE}.
\end{proof}

\subsection{The UMVUE and the rational MLE}
\label{subsect:MLEUM}

Step~2' of Algorithm~\ref{algo:0} is immediate if we
know a closed form expression of the MLE in advance.
In this respect, we have the following result based
on the fact that the number of critical points of
the likelihood over the complex space is one if and
only if the map from the observed counts to the MLE
admits a specific form called the Horn uniformization
\cite{Huh14}.
  
\begin{theorem}[\cite{DMS21}]\label{theo:RMLE}
  Consider a sample of counts $u\in\mathbb{N}^m$ following
  the log-affine model $\mathcal{M}_A$.
  It has the rational MLE if and only if there is a vector
  $\lambda\in\mathbb{R}^m$ and a matrix
  $H=(h_{ij})\in\mathbb{Z}^{l\times m}$ such that 
  $\mathcal{M}_A$ is the image of the rational map
  $\Phi:\mathbb{R}^m_{>0}\dasharrow\mathbb{R}^m_{>0}$:
  \[
    \Phi_j(u_1,\ldots,u_m)
    =\lambda_j\prod_{i=1}^l(\sum_{k=1}^mh_{ik}u_k)^{h_{ij}},
   \quad j\in[m]
  \]
  with $\sum_{j\in[m]}\Phi_j(u)=1$.
\end{theorem}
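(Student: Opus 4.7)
The plan is to reduce the theorem to Huh's characterization of varieties of maximum likelihood degree one~\cite{Huh14}. The bridge between the two statements is the equivalence: the log-affine model $\mathcal{M}_A$ admits a rational MLE if and only if its ML degree---the number of complex critical points of the log-likelihood on the Zariski closure of the model for generic data---is equal to one. Once this equivalence is in place, Huh's theorem supplies the Horn uniformization in exactly the stated form.

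First I would establish the equivalence between rationality of the MLE and ML degree one. In one direction, if the ML degree equals one, then for generic data the score equations on the toric variety underlying $\mathcal{M}_A$ have a unique complex solution; eliminating variables from these polynomial equations expresses that solution as a rational function of the counts. In the other direction, a rational expression for $\hat{\mu}$ exhibits a single critical point for generic data, forcing the ML degree to be one. Uniqueness of the MLE on the positive part of the model, guaranteed by Theorem~\ref{theo:MLE}, lets me pass between the real and complex pictures without loss.

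Next I would invoke Huh's theorem, which asserts that an irreducible projective variety $X\subset\mathbb{P}^{m-1}$ has ML degree one if and only if it equals the image of a Horn uniformization map of the stated form, with $\lambda\in\mathbb{R}^m$ and $H\in\mathbb{Z}^{l\times m}$ read off from the geometry of $X$. Applied to $X=\overline{\mathcal{M}_A}$, this produces the desired $\Phi$. The normalization $\sum_{j\in[m]}\Phi_j(u)=1$ expresses that $\Phi$ lands in the probability simplex and corresponds to a linear relation on the rows of $H$; this relation is guaranteed in our setting by the homogeneity assumption $(1,\ldots,1)\in{\rm rowspan}(A)$ from Definition~\ref{defi:toric}, so that the Horn output is automatically a probability vector.

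The main obstacle is Huh's theorem itself, which relies on characteristic-class computations on very affine varieties and would be far out of reach to reproduce here; taking it as a black box, the remaining work is the direct verification, using $A\hat{\mu}=b$ from Theorem~\ref{theo:MLE}, that the Horn form actually solves the score equations, together with the combinatorial bookkeeping relating the rows of $H$ to the columns of $A$. A secondary subtlety is that Huh's statement is about complex projective varieties, whereas the theorem asserts a real rational parameterization $\Phi:\mathbb{R}^m_{>0}\dashrightarrow\mathbb{R}^m_{>0}$; I would handle this by noting that the rational functions produced by elimination have real coefficients (the score equations do), and that positivity on the relevant chart follows from analyticity of the MLE map near positive data.
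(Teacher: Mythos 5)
You should note at the outset that the paper itself offers no proof of this theorem: it is quoted from \cite{DMS21}, with the surrounding text explicitly framing it via Huh's characterization of maximum-likelihood-degree-one varieties \cite{Huh14}. So your road map --- rational MLE $\Leftrightarrow$ ML degree one, then Huh's Horn uniformization --- is exactly the route the cited literature takes, and in that sense the architecture of your proposal is right. However, two of your steps are glossed in ways that would fail as written.

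First, your argument that a rational MLE forces ML degree one is incomplete: ``a rational expression for $\hat{\mu}$ exhibits a single critical point for generic data, forcing the ML degree to be one'' is a non sequitur, since a rational formula exhibits \emph{one} critical point but does not by itself preclude further complex critical points (a degree-$k$ covering of data space can perfectly well carry a distinguished rational section only if more is known). The actual argument requires the irreducibility of the likelihood correspondence --- the incidence variety of data vectors and critical points, which Huh shows is irreducible for an irreducible model --- so that a rational section of its projection to data space forces that projection to be birational, hence ML degree one. Second, you dismiss the real--complex passage as ``real coefficients plus analyticity,'' but this is precisely where the mathematical content of \cite{DMS21} lies: Huh's theorem concerns complex very affine varieties, and its naive real form is not correct; \cite{DMS21} repair it by imposing the sign and normalization conditions on the pair $(\lambda,H)$ (a ``Horn pair,'' with $\Phi$ defined and positive on positive data and $\sum_{j\in[m]}\Phi_j\equiv 1$ holding as an identity of rational functions). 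Relatedly, your claim that the normalization $\sum_{j\in[m]}\Phi_j(u)=1$ follows from $(1,\ldots,1)\in{\rm rowspan}(A)$ misattributes it: homogeneity of $A$ only yields scale-invariance of the MLE in $u$ (equivalently, that $\Phi$ is homogeneous of degree zero, i.e., the column sums of $H$ vanish), whereas the normalization identity extends from the positive orthant --- where the MLE is a probability vector --- to all of $\mathbb{R}^m$ by Zariski density. With these two repairs (irreducibility of the likelihood correspondence, and the genuine real refinement rather than an analyticity remark), your outline matches the proof of \cite{DMS21} that the paper relies on.
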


Here, we continue Example~\ref{exam:2x2:1}, which
was discussed in \cite{Huh14,DMS21}.

\begin{example}[Two-way contingency tables of the independence model, cont.]\label{exam:2x2:2}
  We have the rational MLE of the expected count
  $\hat{\mu}(b;1)=|u|\Phi(u)$ with
  \[
    \Phi(u)=\left(
    \frac{u_{1\cdot}u_{\cdot 1}}{|u|^2},
    \frac{u_{1\cdot}u_{\cdot 2}}{|u|^2},
    \frac{u_{2\cdot}u_{\cdot 1}}{|u|^2},
    \frac{u_{2\cdot}u_{\cdot 2}}{|u|^2}\right),
  \]
  and from this expression, we can read off 
  \[
    \lambda=(4,4,4,4) \quad \text{and} \quad
    H=\begin{pmatrix}
    1&1&0&0\\0&0&1&1\\-2&-2&-2&-2\\1&0&1&0\\0&1&0&1
    \end{pmatrix},
  \]
  where the rows are for $u_{1\cdot}$,
  $u_{2\cdot}$, $|u|$, $u_{\cdot1}$, and $u_{\cdot2}$,
  respectively. The transition probability of Step~3'
  for the cell at the $j_1$-th row and
    $j_2$-th column (the $\{2(j_1-1)+j_2\}$-th column of
    configuration matrix $A$) becomes
  \begin{equation}\label{rmle_2way}
    \frac{\hat{\mu}_{j_1j_2}(b;1)}{|u|}
    =\frac{u_{j_1\cdot}}{|u|}\frac{u_{\cdot j_2}}{|u|},
    \quad j_1,j_2\in[2].
  \end{equation}
  This condition states that the manifold $\mathcal{M}_A$
  (in the complex space) can be obtained as the Horn uniformization
  of an irreducible polynomial called the $A$-discriminant.
  Here, there exists a matrix $B\in\mathbb{Z}^{r\times l}$
  of rank $r$, such that $BH=0$, which defines the $A$-discriminant.
  In this example,
  \[
  B=\begin{pmatrix}1&1&1&1&1\\0&0&1&2&2\end{pmatrix}.
  \]
  The matrix $B$
  defines the set of polynomials in $t$: $F(t)=(x_1+x_2)+x_3t+(x_4+x_5)t^2$.
  The $A$-discriminant is an irreducible polynomial that
  defines the set of polynomials with multiple roots,
  $x_3^2-4(x_1+x_2)(x_4+x_5)$. See \cite{Huh14} for details.
  In the literature on the $A$-discriminant, we write this $B$
  as $A$, but here we avoid a notational conflict. Note that
  the $A$ in the $A$-discriminant (here, it is denoted by $B$),
  and the $A$ in the $A$-hypergeometric system (Definition~\ref{defi:holo}) are different. 
\end{example}  

In the remainder of this subsection we concentrate
on the log-linear model, so we set $x=1$ in
Definition~\ref{defi:toric}.

Geiger et al.~\cite{GMS06} showed that
an undirected graphical model for discrete variables
has a rational MLE if and only if it is
a decomposable graphical model 
(an undirected graphical model for an undirected graph
$G$ is the log-linear model in which the sufficient
statistics correspond to the cliques of $G$.
see \cite{Lau94}
for the definition of the decomposable graphical model).
Using this fact, we have the following theorem.

The notation for graphical models can be complex;
we follow that of \cite{Lau94} here. The correspondence
between this notation and the notation of
the $A$-hypergeometric polynomial is explained in
detail in \cite[Section 3.2]{MT21+}, so we will
not repeat it here.

\begin{theorem}\label{theo:MLEUM}
  The UMVUE and the MLE of an undirected discrete graphical model
  coincide if and only if the model is a decomposable graphical
  model. Moreover, the MLE is rational.
\end{theorem}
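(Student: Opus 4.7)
The plan is to combine two ingredients from the excerpt: Lemma~\ref{lemm:UMVUE}, which identifies the UMVUE as the unique unbiased function of the complete sufficient statistic $b$, and Theorem~\ref{theo:MLE}, which identifies the MLE as the unique element of $\mathcal{M}_A$ satisfying $A\hat{\mu}=b$. Proposition~\ref{prop:0} already establishes that the UMVUE also satisfies $A\tilde{\mu}=b$, so the uniqueness part of Theorem~\ref{theo:MLE} immediately yields the reduction
\[
\hat{\mu}=\tilde{\mu}\quad\Longleftrightarrow\quad\tilde{\mu}\in\mathcal{M}_A.
\]
The theorem therefore reduces to showing that $\tilde{\mu}(b;1)$ lies in $\mathcal{M}_A$ precisely when the underlying graph $G$ is decomposable; the ``moreover'' clause about rationality of the MLE then comes from~\cite{GMS06}.

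For the forward direction ($G$ decomposable implies UMVUE$=$MLE) I would start from Lauritzen's explicit formula $\hat{\mu}(x)=\prod_{C\in\mathcal{C}}u_C(x_C)/\prod_{S\in\mathcal{S}}u_S(x_S)$, up to the combinatorial power of $|u|$ dictated by $|\mathcal{C}|-|\mathcal{S}|$, and verify directly that this rational estimator is unbiased for $\mu$. The natural argument is an iterated conditional expectation along a junction tree of $G$: conditional on the marginal counts over a separator $S$, the clique marginals on the two sides of $S$ are conditionally independent multinomials, so one can peel off the cliques one at a time, reducing the claim to the trivial fact that a marginal count is unbiased for its marginal mean. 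Once $\hat{\mu}$ is known to be an unbiased function of the complete sufficient statistic $b$, Lemma~\ref{lemm:UMVUE} forces $\hat{\mu}=\tilde{\mu}$, and the rationality assertion is free.

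For the backward direction (UMVUE$=$MLE implies $G$ decomposable) I would argue by contraposition, using Theorem~\ref{theo:RMLE} together with~\cite{GMS06}. If UMVUE and MLE coincided for every $b$, then the MLE would be given explicitly by the $A$-hypergeometric ratio $Z_A(b-a_j;1)/Z_A(b;1)$; unfolding this expression through the Horn uniformization of Theorem~\ref{theo:RMLE} would force the MLE to be a rational function of $b$, which by~\cite{GMS06} occurs only when $G$ is decomposable. Concretely, the task is to read off a factorization of $Z_A(b;1)$ as an alternating ratio of multinomial coefficients indexed by cliques and separators from the chordal structure of $G$, and to show that the failure of such a factorization in the non-chordal case supplies a binomial generator of the toric ideal that is violated by $\tilde{\mu}$.

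The main obstacle is the last step of the backward direction: justifying that coincidence of the MLE with the $A$-hypergeometric ratio actually forces the MLE to be a rational function of $b$. The UMVUE is not a priori rational, nor even obviously algebraic in $b$ in the familiar sense, so the rational-MLE characterizations of~\cite{Huh14,GMS06,DMS21} cannot be pulled off the shelf without first establishing the chordal-factorization of $Z_A(b;1)$. The technical work therefore lies in translating decomposability of $G$ into a multinomial-coefficient factorization of the $A$-hypergeometric polynomial, and in exhibiting, in the non-decomposable case, a concrete toric relation that the UMVUE fails.
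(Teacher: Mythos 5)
Your reduction to the question of whether $\tilde{\mu}\in\mathcal{M}_A$ is sound, and your forward direction would work, though it takes a different route from the paper: the paper computes the UMVUE in closed form from Lauritzen's factorization of the normalizing constant, $Z_A(b;1)=\prod_{S\in\mathcal{S}}\{\prod_{j_S}u(j_S)!\}^{\nu(S)}/\prod_{C\in\mathcal{C}}\prod_{j_C}u(j_C)!$, so that Lemma~\ref{lemm:UMVUE} gives $\tilde{\mu}_j(b;1)=\prod_{C}u(j_C)/\prod_{S}u(j_S)^{\nu(S)}$, which is then simply recognized as Lauritzen's formula for the MLE of a decomposable model. You instead propose to prove the MLE unbiased by junction-tree conditioning and then invoke the uniqueness clause of Lemma~\ref{lemm:UMVUE} (Lehmann--Scheff\'e). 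That is a legitimate alternative --- the conditional independence of clique marginals given a separator does make the expectation of the ratio tractable, provided you fix the $0/0$ convention when a separator count vanishes --- but you leave the peeling argument as a sketch, whereas the paper's computation is a two-line cancellation of factorials.

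The backward direction, however, contains a genuine gap, one you flag yourself but do not close, and your diagnosis of the obstacle is mistaken. You try to deduce that the MLE is a rational \emph{function} of $b$ in order to apply \cite{GMS06}, and you correctly observe that this does not follow from coincidence with the $A$-hypergeometric ratio; but no rational-function statement is needed, because your premise that ``the UMVUE is not a priori rational'' is false in the relevant pointwise sense. For $x=1$, $Z_A(b;1)=\sum_{v\in\mathcal{F}_A(b)}1/v!$ is a finite sum of rational numbers, so $\tilde{\mu}_j(b;1)=Z_A(b-a_j;1)/Z_A(b;1)$ is a rational \emph{number} at every $b\in\mathbb{N}A$. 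This is exactly how the paper argues the only-if direction: by Theorem~4.4 of \cite{GMS06}, a non-decomposable undirected graphical model admits integer data at which the likelihood equations have no rational solution, i.e.\ the MLE has an irrational component there, while the UMVUE is always a rational number; hence the two cannot coincide. Your proposed substitutes --- a chordal multinomial-coefficient factorization of $Z_A(b;1)$, the Horn-uniformization detour through Theorem~\ref{theo:RMLE}, and ``a concrete toric relation that the UMVUE fails'' in the non-chordal case --- are never carried out and are all unnecessary once the pointwise rationality of the UMVUE is noticed; as written, the only-if half of your proof is incomplete.
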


\begin{proof}
  (if) Lemma~4.21 of \cite{Lau94} gives the normalization
  constant of a decomposable graphical model. We can
  represent it as the $A$-hypergeometric polynomial,
  and we have
  \[
  Z_A(b;1)=
  \frac{\prod_{S\in\mathcal{S}}\{\prod_{j_S}u(j_S)!\}^{\nu(S)}}
  {\prod_{C\in\mathcal{C}}\prod_{j_C}u(j_C)!},
  \]
  where $\mathcal{S}$ and $\mathcal{C}$ are the sets
  of separators and cliques of the undirected graph $G$,
  respectively, and $\nu(S)$ is the number of appearances
  of separator $S$ in a perfect sequence of the cliques,
  and $b$ is the vector consisting of
  the sequence of $u(j_C)$, $C\in\mathcal{C}$.
  The index $j$ specifies a state of the vertex set of $G$.
  The indices $j_S$ and $j_C$ are indices obtained by summing
  up indices of $j$ other than $S$ and $C$, respectively,
  and $u(j_S)$ and $u(j_C)$ are the number of counts of
  states consistent with $j_S$ and $j_C$, respectively.
  Lemma~\ref{lemm:UMVUE} shows that the UMVUE is 
  \begin{align}
    \tilde{\mu}_j(b;1)&=\frac{Z_A(b-a_j;1)}{Z_A(b;1)}
    \nonumber\\
  &=
  \frac{\prod_{S\in\mathcal{S}}
    \prod_{i_S\neq j_S}\{u(i_S)!\}^{\nu(S)}
    \{(u(j_S)-1)!\}^{\nu(S)}}
  {\prod_{C\in\mathcal{C}}
    \prod_{i_C\neq j_C}u(i_C)!(u(j_C)-1)!}
   \frac{\prod_{C\in\mathcal{C}}\prod_{i_C}u(i_C)!}
    {\prod_{S\in\mathcal{S}}
      \prod_{i_S}\{u(i_S)!\}^{\nu(S)}}\nonumber\\
  &=\frac{\prod_{C\in\mathcal{C}}u(j_C)}
    {\prod_{S\in\mathcal{S}}u(j_S)^{\nu(S)}}.
  \label{umvue_dec}  
  \end{align}
  The right-hand side is the expression of the MLE
  $\hat{\mu}_j(b;1)$ given in Proposition~4.18
  of \cite{Lau94} and the rational
  function of observed counts.\\
  (only if) Suppose we have a non-decomposable graphical
  model. Then, Theorem~4.4 of \cite{GMS06} implies that
  we cannot always find a rational solution to the
  maximum likelihood estimation. On the other hand,
  the expression of the UMVUE in Lemma~\ref{lemm:UMVUE}
  implies that the UMVUE is always a rational number,
  because of the definition of the $A$-hypergeometric
  polynomial \eqref{Apol}. Therefore, the MLE and
  the UMVUE never coincide.
\end{proof}  

With Proposition~\ref{prop:0}, we immediately have the following
corollary.

\begin{corollary}\label{coro:MLEUM}
  The UMVUE of an undirected discrete graphical model
  lies in the model manifold $\mathcal{M}_A$ if and only
  if the model is a decomposable graphical model.
\end{corollary}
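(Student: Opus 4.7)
The plan is to derive the corollary directly by combining Theorem~\ref{theo:MLEUM} with Proposition~\ref{prop:0} and the uniqueness statement in Theorem~\ref{theo:MLE}. My mental picture is that Proposition~\ref{prop:0} already tells us the UMVUE solves the estimating equation $A\tilde{\mu}=b$, so the only way it can differ from the MLE is if it fails to lie in the model manifold $\mathcal{M}_A$. Thus the extra hypothesis $\tilde{\mu}\in\mathcal{M}_A$ should be equivalent to $\tilde{\mu}=\hat{\mu}$, and Theorem~\ref{theo:MLEUM} then converts this equality into the decomposability criterion.

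For the \emph{if} direction, I would argue that if the graphical model is decomposable, Theorem~\ref{theo:MLEUM} already gives $\tilde{\mu}=\hat{\mu}$; since the MLE lies in $\mathcal{M}_A$ by the very definition of maximum likelihood estimation in the extended log-affine model (which guarantees existence regardless of positivity of counts), the UMVUE does too.

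For the \emph{only if} direction, I would suppose $\tilde{\mu}\in\mathcal{M}_A$. Proposition~\ref{prop:0} gives $A\tilde{\mu}=b$, so $\tilde{\mu}$ is an element of $\mathcal{M}_A$ satisfying the estimating equation \eqref{est}. Theorem~\ref{theo:MLE} guarantees that there is a unique such element, namely $\hat{\mu}$, so $\tilde{\mu}=\hat{\mu}$. Then Theorem~\ref{theo:MLEUM} forces the model to be decomposable.

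I do not expect any substantive obstacle; the statement is essentially a rephrasing of Theorem~\ref{theo:MLEUM}. The only care required is in reading the uniqueness in Theorem~\ref{theo:MLE} as uniqueness \emph{within} $\mathcal{M}_A$, which is the relevant sense here: as emphasized in the paragraph preceding Proposition~\ref{prop:0}, the UMVUE always satisfies the first-moment equation but generally lives outside the toric variety $\mathcal{M}_A$, and so the membership hypothesis is precisely what bridges to the MLE.
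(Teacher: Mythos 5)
Your proposal is correct and matches the paper's own (implicit) argument: the paper derives the corollary ``immediately'' from Proposition~\ref{prop:0} together with Theorem~\ref{theo:MLEUM}, using exactly the observation stated before Proposition~\ref{prop:0} that if $\tilde{\mu}\in\mathcal{M}_A$ then, since $A\tilde{\mu}=b$, the uniqueness in Theorem~\ref{theo:MLE} (Birch-type uniqueness within $\mathcal{M}_A$) forces $\tilde{\mu}=\hat{\mu}$. Your reading of the uniqueness clause and the converse direction via decomposability are precisely the intended steps, so nothing is missing.
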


\begin{example}[Two-way contingency tables of the independence model, cont.]\label{exam:2x2:3}
  The two-way contingency table of the independence
  model is a decomposable graphical model:
  the undirected graph consists of two vertices, say 1 and 2,
  corresponding to the row and column, respectively.
  It has no edges. A perfect sequence of the cliques is
  $C_1=\{1\}$, $C_2=\{2\}$ with separator $S=\emptyset$,
  $\nu(S)=1$. If we write $j=j_1j_2$,
  $j_{C_1}=j_1\cdot$ and $j_{C_2}=\cdot j_2$, we have
  $u(j_{C_1})=u_{j_1\cdot}$, $u(j_{C_2})=u_{\cdot j_2}$, and
  $u(S)=|u|$ (all indices are summed up).
  $b=(u_{1\cdot},u_{2\cdot},u_{\cdot1},u_{\cdot2})^\top$.
  Then, \eqref{umvue_dec} gives the UMVUE
  \[
  \tilde{\mu}_{j_1j_2}(b;1)=\frac{u(j_{C_1})u(j_{C_2})}{u(S)}
  =\frac{u_{j_1\cdot}u_{\cdot j_2}}{|u|},
  \]
  which coincides with the rational MLE appeared in
  \eqref{rmle_2way}.
\end{example}  

Note that Theorem~\ref{theo:MLEUM} is a statement limited to
undirected discrete graphical models, and does not provide
a necessary and sufficient condition for the UMVUE and the MLE
to coincide in log-linear models. In fact, as the following
example shows, the UMVUE and the MLE can coincide in log-linear
models that are not graphical models.

\begin{example}[Two-way contingency tables of the quasi-independence models.]\label{exam:q2x2}
  Coons and Sullivant~\cite{CS21} classified the two-way
  contingency tables of the quasi-independence models
  (independence models with structural zeros) that have
  rational MLEs. The quasi-independence models are not
  graphical models, since the correspondence between
  the sufficient statistics and the cliques is incomplete.
  In Example~2.5 of \cite{CS21}, the rational MLE
  of the $3\times 3$ table whose cell at the third row and
  the third column is zero was given. The table is
  \[
    \begin{array}{ccc|c}
    u_{11}&u_{12}&u_{13}&u_{1\cdot}\\
    u_{21}&u_{22}&u_{23}&u_{2\cdot}\\
    u_{31}&u_{32}& 0    &u_{31}+u_{32}\\    
    \hline
    u_{\cdot1}&u_{\cdot2}&u_{13}+u_{23}&|u|
    \end{array}.
  \]
  The configuration matrix, the vectors of the observed counts,
  and the vector of the sufficient statistics are
  \begin{equation*}
    A\!=\!\left(\begin{array}{cccccccc}
      1&1&1&0&0&0&0&0\\
      0&0&0&1&1&1&0&0\\
      0&0&0&0&0&0&1&1\\
      1&0&0&1&0&0&1&0\\
      0&1&0&0&1&0&0&1\\
      0&0&1&0&0&1&0&0\\
    \end{array}\right),\quad
    u\!=\!\left(
    \begin{array}{c}
      u_{11}\\u_{12}\\u_{13}\\u_{21}\\u_{22}\\u_{23}\\
      u_{31}\\u_{32}
    \end{array}
    \right),\quad {\rm and}\quad
    b=\left(
    \begin{array}{c}
      u_{1\cdot}\\u_{2\cdot}\\u_{31}+u_{32}\\u_{\cdot1}\\u_{\cdot2}\\u_{13}+u_{23}
    \end{array}
    \right),
  \end{equation*}
  respectively. The rational MLE of the expected count of
  the first row  and the third column is
  \[
    \hat{\mu}_{13}(b;1)=\frac{u_{1\cdot}(u_{13}+u_{23})}{u_{1\cdot}+u_{2\cdot}},
  \]    
  which coincides with the UMVUE. See Appendix for the details.
\end{example}    

\subsection{The approximate algorithm without using
  connection matrices}
\label{subsect:IPS}

In this subsection, we propose to use Algorithm~\ref{algo:0}
with Steps 2 and 3 replaced by Steps 2' and 3', respectively.
We call the resulting algorithm
the {\it approximate direct sampling algorithm}. From
Lemma~2.4, when the MLE is not the UMVUE, i.e., it is not
the rational MLE, then there exists a bias that disappears
asymptotically. Thus the resulting algorithm is approximate
in the sense that it is not exact.

Before proceeding, we summarize what we know about
Algorithm~\ref{algo:0} according to Theorem~\ref{theo:MLEUM}
as a proposition.

\begin{proposition}\label{prop:exact}
  For an undirected discrete graphical model,
  the approximate algorithm obtained
  from Algorithm~\ref{algo:0} by replacing Steps 2 and 3 with
  Steps 2' and 3' coincides with the original
  Algorithm~\ref{algo:0} if and only if the model
  is a decomposable graphical model. Then, the algorithm is
  exact. 
\end{proposition}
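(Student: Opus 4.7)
The plan is to reduce the proposition to a pointwise comparison of the UMVUE and the MLE on the Markov lattice, at which point Theorem~\ref{theo:MLEUM} does all the work. First, I would rewrite the exact transition probability \eqref{kernel} in terms of the UMVUE: by Lemma~\ref{lemm:UMVUE} together with the homogeneity identity $\sum_{j\in[m]}\tilde{\mu}_j(\beta;x)=\deg(\beta)$ derived immediately after its proof, one has $P(\beta,\beta-a_j;x)=\tilde{\mu}_j(\beta;x)/\deg(\beta)$. Since at iteration $t$ of the main loop of Algorithm~\ref{algo:0} the degree satisfies $\deg(\beta)=n-t+1$, Step~3 picks $j_t=j$ with probability $\tilde{\mu}_j(\beta;x)/(n-t+1)$, while Step~3' picks $j_t=j$ with probability $\hat{\mu}_j(\beta;x)/(n-t+1)$. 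Therefore the two Markov kernels agree at a given $\beta\in\mathcal{L}_A(b)$ if and only if $\tilde{\mu}(\beta;x)=\hat{\mu}(\beta;x)$.

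Second, I would invoke Theorem~\ref{theo:MLEUM}. For the ``if'' direction, suppose the undirected graphical model is decomposable. Then Theorem~\ref{theo:MLEUM} gives $\tilde{\mu}(\beta;x)=\hat{\mu}(\beta;x)$ for every $\beta\in\mathbb{N}A$, so the kernels coincide at every state reachable by the chain regardless of the input $b$, and the two procedures are identical as randomized algorithms. Exactness then follows because Algorithm~\ref{algo:0} samples exactly from \eqref{cond} by construction. Conversely, if the model is not decomposable, Theorem~\ref{theo:MLEUM} asserts that the UMVUE and the MLE fail to coincide, so there exists some $\beta^\star\in\mathbb{N}A$ with $\tilde{\mu}(\beta^\star;x)\neq\hat{\mu}(\beta^\star;x)$. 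Taking $b=\beta^\star$ as input, at iteration $t=1$ we have $\beta=b$, and already at this first step the probabilities produced by Step~3 and Step~3' differ in some coordinate; hence the two algorithms do not coincide.

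The only mild subtlety, and what I would treat as the main obstacle, is making sure Theorem~\ref{theo:MLEUM} is read strongly enough for the ``only if'' direction: we need an actual witness $\beta^\star$ exhibiting the inequality, not merely a statement that the estimators differ as abstract rules. The proof of Theorem~\ref{theo:MLEUM} already supplies this, since in the non-decomposable case the UMVUE is always rational while the MLE need not be, so generic inputs $b$ serve as witnesses; choosing any such $b$ as the input of Algorithm~\ref{algo:0} completes the argument. Everything else is bookkeeping: the homogeneity identity to match normalizations, and the exactness of Algorithm~\ref{algo:0} which is inherited by the approximate version precisely when the kernels agree.
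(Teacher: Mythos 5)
Your proposal is correct and follows exactly the route the paper intends: the paper presents Proposition~\ref{prop:exact} as an immediate consequence of the identity $P(\beta,\beta-a_j;x)=\tilde{\mu}_j(\beta;x)/\deg(\beta)$ (established at the end of Section~\ref{subsect:UMVUE}) combined with Theorem~\ref{theo:MLEUM}, which is precisely your reduction to pointwise agreement of the UMVUE and MLE. Your extra care in the ``only if'' direction (extracting a concrete witness $b=\beta^\star$, available since the paper's proof of Theorem~\ref{theo:MLEUM} produces inputs where the MLE is irrational while the UMVUE is rational) is a faithful elaboration of the same argument rather than a different approach.
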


Theorem~\ref{theo:RMLE} means that the time computational
complexity of Step~2 of Algorithm~\ref{algo:0} is $O(lm)$
if we use the rational MLE.
  
\begin{remark}
  The fact that Step~2 of Algorithm~\ref{algo:0} is immediate
  by the rational MLE for decomposable graphical models is
  not surprising, given that we can sample directly by combining
  urn schemes (see, e.g., \cite[Section 9.6]{Sul18}). However,
  an implementation of Algorithm~\ref{algo:0} may be easier.
\end{remark}

Combining Remark~\ref{rema:hit} and Proposition~\ref{prop:bound},
we arrive at the following consequence: if sufficient
statistics lie in the boundary of the polyhedral cone generated
by the column vectors of $A$, the MLE coincides with the UMVUE and
vanishes exactly without bias. In practical terms, we obtain
the following proposition.

\begin{proposition}\label{prop:lattice}
  The approximate algorithm obtained from Algorithm~\ref{algo:0}
  by replacing Steps 2 and 3 with Steps 2' and 3' is a Markov
  chain on the Markov lattice defined in Definition~\ref{defi:lattice}.
\end{proposition}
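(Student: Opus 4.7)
The plan is to show two things: that Step 3' is a well-defined probability distribution, and that each transition preserves membership in $\mathbb{N}A$, so the chain never leaves $\mathcal{L}_A(b)$.

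First I would verify that the weights in Step 3' sum to one. Since $(1,\ldots,1)\in\mathrm{rowspan}(A)$, there exists $c\in\mathbb{Q}^d$ with $c^\top A=(1,\ldots,1)$. By Theorem~\ref{theo:MLE}, $A\hat{\mu}(\beta;x)=\beta$, hence
\[
\sum_{j\in[m]}\hat{\mu}_j(\beta;x)=c^\top A\hat{\mu}(\beta;x)=c^\top\beta=\deg(\beta).
\]
At the $t$-th iteration the current state $\beta$ has been obtained from $b$ by subtracting $t-1$ column vectors $a_{j_1},\ldots,a_{j_{t-1}}$, so $\deg(\beta)=\deg(b)-(t-1)=n-t+1$. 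Together with non-negativity of $\hat{\mu}$ (guaranteed by passing to the extended log-affine model), this shows that $\hat{\mu}_j(\beta;x)/(n-t+1)$, $j\in[m]$, defines a probability distribution on $[m]$ at every step.

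Next I would argue by induction on $t$ that $\beta\in\mathbb{N}A$ after each iteration, which forces the sample path to lie in $\mathcal{L}_A(b)$. The base case $t=0$ is immediate since $\beta=b\in\mathbb{N}A$. For the inductive step, assume $\beta\in\mathbb{N}A$ at the start of an iteration. By Proposition~\ref{prop:bound}, for every index $j$ with $\beta-a_j\notin\mathbb{N}A$ we have $\hat{\mu}_j(\beta;x)=0$, so Step 3' picks such an index with probability zero. Therefore, with probability one, the index $j_t$ selected satisfies $\beta-a_{j_t}\in\mathbb{N}A$, and after Step 4 the updated state $\beta-a_{j_t}$ is again in $\mathbb{N}A$. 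By the partial order in Definition~\ref{defi:lattice}, each such transition is an edge of $\mathcal{L}_A(b)$ of the form $\beta\to\beta-a_{j_t}\prec\beta$.

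Finally, since the distribution used to pick $j_t$ in Step 3' depends only on the current state $\beta$ (through $\hat{\mu}(\beta;x)$ and $n-t+1=\deg(\beta)$), the process is Markovian. Combining this with the inductive argument above shows that the approximate algorithm is a Markov chain whose state space is contained in $\mathcal{L}_A(b)$, with transitions along its edges. The only subtle point is ensuring that the non-negative MLE of a boundary component vanishes exactly (not merely as an estimate): this is precisely Proposition~\ref{prop:bound}, which I would cite explicitly. Once that is in hand the proof is essentially a one-line combination of Proposition~\ref{prop:bound} with the homogeneity identity above.
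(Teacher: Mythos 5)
Your argument is correct as far as it goes, and it is organized differently from the paper's proof. You verify directly that the approximate algorithm is well defined---the Step~3' weights are non-negative (existence of the extended MLE) and sum to one, via $c^\top A=(1,\ldots,1)$, the estimating equation $A\hat{\mu}(\beta;x)=\beta$ from Theorem~\ref{theo:MLE}, and $\deg(\beta)=n-t+1$---and then you show by induction, using Proposition~\ref{prop:bound}, that the state never leaves $\mathbb{N}A$, so every transition is an edge $\beta\to\beta-a_{j_t}\prec\beta$ of $\mathcal{L}_A(b)$. The paper instead argues by comparison with the exact Algorithm~\ref{algo:0}: it asserts that the positivity pattern of the approximate transition probabilities agrees with that of the exact ones \emph{in both directions}, deduces that the two chains have the same set of possible sample paths, and concludes from the fact that the support of the exact chain is the Markov lattice. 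Your well-definedness check (that the weights form a probability distribution at every step) is a worthwhile addition that the paper leaves implicit.

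The one substantive difference is that you prove only one inclusion. Proposition~\ref{prop:bound} gives $\beta-a_j\notin\mathbb{N}A\Rightarrow\hat{\mu}_j(\beta;x)=0$, which is all you use; this shows sample paths stay \emph{inside} the lattice, but not that every lattice edge is taken with positive probability, i.e., that the support of the approximate chain is the \emph{whole} lattice and hence agrees with that of the exact algorithm---which matters, since otherwise some tables in the fiber could become unreachable. For that you need the converse: if $\beta-a_j\in\mathbb{N}A$ then $\hat{\mu}_j(\beta;x)>0$. This holds because $\beta$ lies in the relative interior of a unique face $F$ of $\mathrm{Cone}(A)$ and the extended MLE is supported exactly on $\{j:a_j\in F\}$; if $\beta-a_j\in\mathbb{N}A$, then $\beta=(\beta-a_j)+a_j$ decomposes $\beta$ into two elements of the cone, and the face property forces $a_j\in F$. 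The paper's proof invokes this ``vice versa'' direction (also tersely, attributing it to Proposition~\ref{prop:bound}, which strictly states only one implication), and it is what justifies reading the proposition as ``the chain is \emph{on} $\mathcal{L}_A(b)$'' rather than merely ``contained in $\mathcal{L}_A(b)$.'' Supply this half and your proof is complete, and arguably more self-contained than the paper's.
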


\begin{proof}
  (Markov property) Even in the algorithm where Steps~2 and 3
  are replaced by Steps~2' and 3', the transition probability
  remains determined by the current state, so the Markov
  property holds.\\
  (agreement of positivity of transition probabilities)
  Proposition~\ref{prop:bound} asserts that when a component
  of the UMVUE vanishes, that component of the MLE also vanishes,
  and vice versa. Therefore, for a transition in which
  the transition probability of Algorithm~\ref{algo:0} is 0,
  the transition probability of the approximate algorithm is
  also 0, and vice versa. Furthermore, for a transition in
  which the transition probability of Algorithm~\ref{algo:0}
  is positive, the transition probability of the approximate
  is also positive, and vice versa.\\
  (agreement of supports of sample paths)
  Let $(j_1,...,j_n)$
  denote a sample path obtained with positive probability in
  Algorithm~\ref{algo:0}, and $p_{j_t,j_{t+1}}$, $t=1,2,...,n-1$
  denote the transition probabilities of each step.
  Since the algorithm produces a Markov chain, the probability
  of obtaining the sample path is given by the product of these,
  so $p_{j_t,j_{t+1}}$ must all be positive. The transition
  probability of each step of the approximate algorithm
  is expressed as $p'_{j_t,j_{t+1}}$, $t=1,2,...,n-1$.
  In this case, from the agreement of positivity of transition
  probabilities shown above, we can see that the probability of
  obtaining the sample path $(j_1,...,j_n)$, namely, the product
  of $p'_{j_t,j_{t+1}}$, $t=1,2,...,n-1$, is positive. Conversely,
  a sample path in the approximate algorithm is also obtained
  with a positive probability in
  Algorithm~\ref{algo:0}. Thus, we conclude that whether
  a certain sample path is positive or zero agrees between
  Algorithm~\ref{algo:0} and the approximate algorithm.
  Since the Markov lattice is the support of sample paths of
  Algorithm~\ref{algo:0}, it is also the support of sample
  paths of the approximate algorithm.
\end{proof}

\begin{remark}\label{rema:bias}
In general, the MLE has asymptotic normality; that is,
$\sqrt{n}(\hat{\mu}-\mu)$ converges in distribution
to the normal distribution of expectation zero, where $n$
is the total number of counts. As a result, the MLE is
asymptotically unbiased. For the covariance of the
normal distribution, see \cite{Lau94}, page 78.
However, this does not mean that the approximate
algorithms is unbiased asymptotically when $n$ is large.
This is because the algorithm proceeds by reducing
the total number of counts in the contingency table
with each transition (see the tables in the first line
of \eqref{tables}), so the total number of counts in
the contingency tables at later steps of the algorithm
becomes small. See Section~\ref{sect:exp} for numerical examples.
\end{remark}

For any log-affine model, we can always evaluate the MLE
numerically solving the estimating equation \eqref{est}
by using the {\it iterative proportional scaling}
(IPS), specifically called the generalized IPS by Darroch
and Ratcliff \cite{DR72}. We rewrite the procedure in our
notation. Consider a sample of counts $u\in\mathbb{N}^m$
following the log-affine model $\mathcal{M}_A$
satisfying $Au=\beta$, where $\beta\in\mathbb{N}A$.
For simplicity, we assume $A\in\mathbb{N}^{d\times m}$,
$\beta_i>0, i\in[d]$, and all columns of $A$ have the same
sum denoted by $s$, but a general form is available in
\cite{DR72}. That is
\begin{align}
  &p_j^{(0)}=\frac{x_j}{\sum_{k\in[m]}x_k}, \quad j\in[m], \quad \text{and} \nonumber\\
  &p_j^{(t+1)}=p_j^{(t)}
  \left\{
  \prod_{i\in[d]}
  \left(
  \frac{\beta_i/|u|}{\sum_{k\in[m]} a_{ik}p_k^{(t)}}\right)^{a_{ij}}
  \right\}^{1/s}, \quad j\in[m].\label{ips2}
\end{align}  
Theorem~1 of \cite{DR72} guarantees the convergence:
$\lim_{t\to\infty}|u|p^{(t)}=\hat{\mu}(\beta;x)$.

The time computational complexity of Step~2' of
Algorithm~\ref{algo:0} is $O(dm\tau)$ if we use the generalized
IPS, where $\tau$ is the minimum number of iterations to
achieve convergence with predetermined precision.
In the numerical experiments reported in Section~\ref{sect:exp}, we
used the iteration \eqref{ips2} with a convergence criterion to stop
the iteration at the time
\begin{equation}\label{conv}
  \tau:=\min\left\{t\ge 1:\sum_{i=1}^d||u|\sum_{k\in[m]}a_{ik}p_k^{(t)}-\beta_i|<\epsilon d\right\}
\end{equation}
for some small $\epsilon>0$, which means the error per a 
sufficient statistic is less than $\epsilon$.

\begin{remark}
  There are various types of IPS algorithms for maximum likelihood
  estimation, and we may use any of them.
  The classical IPS known as the Deming--Stephan
  algorithm for log-linear models has the iteration with setting
  $s=1$ in \eqref{ips2}. Coons et al.~\cite{CLR24} considered
  a subfamily of the log-linear model and obtained a sufficient
  condition under which the classical IPS produces the rational
  MLE in one cycle of iteration. 
\end{remark}  

We summarize the computational cost of the 
algorithms proposed in this paper. We only consider the time
complexity because the space complexity, i.e., the memory
requirement of the implementation, is small enough that they
are of no concern.

\begin{proposition}
  The time computational complexity of Algorithm~\ref{algo:0}
  is $O(lm|u|)$ if we use the rational MLE for Step~2, where
  $l$ is in Theorem~\ref{theo:RMLE}. For the approximate
  algorithm obtained from Algorithm~\ref{algo:0} by replacing
  Steps~2 and 3 with Steps~2' and 3', the complexity is
  $O(dm\tau|u|)$ if we use the generalized IPS for Step~2'.
  Here, \eqref{conv} defines $\tau$.
\end{proposition}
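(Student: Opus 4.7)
My plan is to establish each of the two complexity bounds by observing that the outer loop of Algorithm~\ref{algo:0} runs exactly $|u|$ times, and by accounting separately for the dominant per-iteration cost, which is that of Step~2 (respectively Step~2'). The remaining work per iteration---Step~3 or 3', which is an $O(m)$ sampling from a discrete distribution on $[m]$, and Step~4, which is an $O(d)$ update of $\beta$---is dominated by the cost of computing the transition probabilities, so only the latter needs careful accounting.

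For Algorithm~\ref{algo:0} with the rational MLE, I would invoke Theorem~\ref{theo:RMLE}, which expresses each component as
\[
\hat{\mu}_j(\beta;1)=\deg(\beta)\,\lambda_j\prod_{i=1}^l S_i^{h_{ij}}, \qquad S_i:=\sum_{k=1}^m h_{ik} v_k,
\]
for any $v\in\mathcal{F}_A(\beta)$. At the start of each outer iteration the $l$ linear combinations $S_i$ are computable in $O(lm)$ arithmetic operations (or updated incrementally in $O(l)$), after which each of the $m$ entries $\hat{\mu}_j$ is obtained in $O(l)$, for $O(lm)$ per iteration. Multiplying by the $|u|$ outer iterations yields the claimed $O(lm|u|)$.

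For the approximate algorithm with the generalized IPS, I would decompose one step of \eqref{ips2} into (i) computing the $d$ denominators $\sum_{k\in[m]} a_{ik} p_k^{(t)}$ in $O(dm)$ operations, and (ii) for each $j\in[m]$ evaluating the product $\prod_{i\in[d]}(\cdot)^{a_{ij}/s}$ in $O(d)$ operations, for $O(dm)$ total. Hence one IPS step costs $O(dm)$; by the definition \eqref{conv} of $\tau$, Step~2' costs $O(dm\tau)$; and combining with the outer loop gives $O(dm\tau|u|)$.

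The main obstacle is merely bookkeeping: one must ensure that no precomputation is double-counted, note that a reference count vector $v$ with $Av=\beta$ (along with the associated sums $S_i$, or a warm start for the IPS) can be updated incrementally in $O(d)$ or $O(l)$ time at Step~4, and adopt the standard unit-cost arithmetic convention noted explicitly in the statement. Once these bookkeeping points are settled, no deeper ingredient is required.
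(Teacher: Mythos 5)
Your proposal is correct and takes essentially the same route as the paper, which offers no separate proof of this proposition: the per-iteration bounds are asserted in the surrounding text (``Theorem~\ref{theo:RMLE} means that the time computational complexity of Step~2 \ldots is $O(lm)$'' and ``The time computational complexity of Step~2' \ldots is $O(dm\tau)$ if we use the generalized IPS''), and the proposition is just these costs multiplied by the $n=|u|$ iterations of the loop. Your explicit decomposition (the $l$ linear forms $S_i$ in $O(lm)$ then $m$ entries in $O(l)$ each; the $d$ denominators of \eqref{ips2} in $O(dm)$ then $O(d)$ per cell) together with the observation that Steps~3--4 are dominated simply fills in the bookkeeping the paper leaves implicit.
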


In the approximate algorithm, we avoided the need to compute
the connection matrices of the holonomic ideal generated by
the $A$-hypergeometric system. Therefore, we can avoid the
computation of Gr\"obner bases or investigation of specific
properties of each model for any model. We explain how we
achieved this before closing this section.

For the holonomic ideal, we have a {\it Pfaffian system} of equations:
\begin{equation}\label{pfaff}
  x_j\partial_jq(\beta;x)=F_j(\beta;x)q(\beta;x), \quad
  j\in[m]
\end{equation}
for some connection matrices $F_j(\beta;x)$, $j\in[m]$ and the vector
\[
  q(\beta;x)
  =(Z_A(\beta;x),\partial^{u(1)}Z_A(\beta;x),\ldots,\partial^{u(r-1)}Z_A(\beta;x))^\top,
\]
where $\{1,\partial^{u(1)},\ldots,\partial^{u(r-1)}\}$
is the set of the standard monomials of the holonomic ideal
of rank $r$, where $\partial^{u(\cdot)}$ is a monomial in
$\partial_j$, $j\in[m]$. The standard monomials provide
the basis of the solutions of the $A$-hypergeometric
system. The system of equations \eqref{pfaff} can be obtained
via a normal form with respect to a Gr\"obner basis of
the holonomic ideal (see, e.g., \cite[Theorem 1.4.22]{SST00}).
Then, with the contiguity relation \eqref{cont}, we obtain
a recurrence relation for the vectors $q(\beta;x)$:
\begin{equation}\label{rec2}
  x_jq(\beta-a_j;x)=F_j(\beta;x)q(\beta;x), \quad
  j\in[m],
\end{equation}
which was originally used to compute the $A$-hypergeometric
polynomial $Z_A(\beta;x)$ \cite{OT15} and used in a previous
implementation of the direct sampling algorithm
(\cite{MT21+}, Algorithm~2.2). Nevertheless,
Lemma~\ref{lemm:UMVUE} is equivalent to a relation
\[
x_jZ_A(\beta-a_j;x)=\tilde{\mu}_j(\beta;x)Z_A(\beta;x), \quad
j\in[m].
\]
Comparing this with \eqref{rec2}, we see that
by appropriate basis transformation we can make
the first row of the connection matrix to be
$(\tilde{\mu}_j(\beta;x),0,\ldots,0)$. All we need is
an evaluation of the UMVUE $\tilde{\mu}$ exactly or
approximately, since the recurrence relation for
the first element of the transformed vector decouples
from the others. Here, the MLE asymptotically coincides
with the UMVUE and can be used as an approximation of
the UMVUE, but we can compute the MLE without using
the recurrence relation.

\section{Numerical experiments}\label{sect:exp}

We applied the direct sampling algorithm and the approximate
algorithm introduced in Section~\ref{subsect:IPS} to three
log-affine models. The first two models are two-way contingency
tables. Since the independence model introduced in
Example~\ref{exam:2x2:1} is a decomposable graphical model,
the UMVUE coincides with the rational MLE
by Theorem~\ref{theo:MLEUM}, and the original
Algorithm~\ref{algo:0} works efficiently.
The other is a non-independence model, which is not log-linear.
In Section~4.2 of \cite{MT21+}, the authors applied a previous
implementation of the direct sampling algorithm
(Algorithm 2.2 of \cite{MT21+}) to the non-independence model.
It is exact, but they found it was inefficient due to
the computational load. The last model is a no-three-way
interaction model for the three-way contingency tables. It is
log-linear, but not a graphical model (the edges corresponding
to the two-way interactions make the complete graph of
the three vertices, but the model has no three-way interaction),
and one can see that the UMVUE and the MLE do not coincide.
In Section~4.3 of \cite{MT21+}, the authors discussed
the model but did not experiment since it
involves the recurrence relation \eqref{rec2} for a vector of
81 dimensions, and its computation was prohibitive.

We implemented the direct sampling and the Metropolis
algorithms on a computer and compared their performance
because they are equally easy to implement, and
the Metropolis algorithm is the standard in practitioners.

To evaluate the performance of the Metropolis algorithm,
it is first necessary to assess the convergence of
the Metropolis chain to the stationary distribution.
It is a nontrivial task with various arguments
(see, e.g., \cite[Section 12.2]{RC04}). In this study,
taking advantage of the situation where we have another
feasible sampling algorithm, we assessed the convergence of
the Metropolis chain using the direct sampling algorithm.

Since the number of possible contingency tables is enormous,
we employed the distribution of the chi-square values of
the contingency table rather than directly assessing
the distribution of contingency tables. We estimated
the distribution of chi-square values under the conditional
distribution \eqref{cond} of contingency tables, that is,
\begin{equation}\label{chi2}
  p(z):=\mathbf{P}(\chi^2(U)=z|AU=b), \quad z\ge 0,
\end{equation}
by the empirical distribution $\hat{p}_{\rm M}$ of contingency
tables taken from the Metropolis chain and the empirical
distribution $\hat{p}_{\rm D}$ of contingency tables taken by
the direct sampling algorithm. We may regard these estimates as
the Monte-Carlo integrations of the indicator function
$1\{\chi^2(\cdot)=z\}$. Then, we computed the total variation
distance between them:
\begin{equation}\label{TV}
  \|\hat{p}_{\rm M}-\hat{p}_{\rm D}\|_{\rm TV}
  =\frac{1}{2}\sum_z|\hat{p}_{\rm M}(z)-\hat{p}_{\rm D}(z)|^2.
\end{equation}
Here, we generated many contingency tables for $\hat{p}_{\rm D}$
in advance so that the empirical distribution of the direct
sampling algorithm gives the best possible approximation of
the truth \eqref{chi2}. If we use the (exact) direct sampling
algorithm, \eqref{TV} approximates the total variation
distance between the distribution of chi-square values taken
from the Metropolis chain and that of the stationary distribution.
Otherwise, we can still assess the convergence of the Metropolis
chain because the total variation distance should converge to
a non-zero value.

Contingency tables taken from the Metropolis chain are
not independent. To account for the autocorrelation among
contingency tables, we adopted the concept of the
{\it effective sample size} (ESS). The ESS of $N$ consecutive
steps in a Metropolis chain is
$N/(1+2\sum_{t\ge 1}\rho_t)$, where $\rho_t$ is
the autocorrelation of chi-square values at lag $t$. We used
sample autocorrelation and censored the sum where the sample
autocorrelation was less than 0.01.

The results of the numerical experiments described in this
section reveal the following:

\begin{itemize}
  
\item [(1)] For the non-independence model of two-way
  contingency tables, which is a decomposable graphical model,
  the direct sampling algorithm runs faster than the Metropolis
  algorithm. 

\item[(2)] For general log-affine models, the direct sampling
  algorithm requires the calculation of transition probabilities.
  It is different from the Metropolis algorithm, where we can
  use the same Markov basis as the corresponding log-linear
  model ($x=1$ in Definition~\ref{defi:toric}).
  For a non-independence model of two-way contingency tables,
  the approximate direct sampling algorithm is several tens of
  times faster than the exact direct sampling algorithm.
  Although it needs a larger amount of time than the Metropolis
  algorithm in terms of the effective sample size, it can be
  calculated in a similar amount of time. If the number of
  contingency tables is fixed, as is often done when using
  the Metropolis algorithm, the approximate algorithm achieves
  sampling closer to the true distribution than the Metropolis
  algorithm.

\item[(3)] As an example of a non-graphical
  log-linear model, we examined the no-three-way interaction
  model of three-way contingency tables. Its Markov basis is known.
  The exact direct sampling algorithm was infeasible because
  of the computational burden. The approximate direct sampling
  algorithm required several tens of times longer than
  the Metropolis algorithm in terms of the effective sample
  size. However, note that the approximate direct sampling
  algorithm works whenever the MLE is available.
  The Metropolis algorithm cannot be applied unless we know
  a Markov basis.

\end{itemize}

We conducted all computations in the {\tt R} statistical
computing environment, using floating-point arithmetic,
except for the results shown at the end of
Section~\ref{subsect:2way}.
The following timing results were taken on a single
core of the CPU (Intel Core i5-4308U CPU, 2.8GHz) of
a machine with 8GB memory.

\subsection{Two-way contingency tables}\label{subsect:2way}

We investigated the performance of the direct sampling
algorithm by exemplifying the two-way contingency tables.
The independence model is a log-linear decomposable model
(see Example~\ref{exam:2x2:3}), while the non-independence
model is a log-affine model but not a log-linear model.
We investigated following two-way contingency tables with
fixed marginal sums for $s=1,2,5$ and $10$:
\[
\begin{array}{ccccc|c}
    u_{11}&u_{12}&u_{13}&u_{14}&u_{15}&5s\\
    u_{21}&u_{22}&u_{23}&u_{24}&u_{25}&5s\\
    u_{31}&u_{32}&u_{33}&u_{34}&u_{35}&5s\\
    u_{41}&u_{42}&u_{43}&u_{44}&u_{45}&5s\\
    \hline
    4s&4s&4s&4s&4s&20s
  \end{array},
\]
where the vector
\[
  u=(u_{11},\ldots,u_{15},u_{21},\ldots,u_{25},\ldots,
  u_{41},\ldots,u_{45})^\top,
\]
is random, the vector of sufficient statistics is
\[
  b=(u_{1\cdot},\ldots,u_{4\cdot},u_{\cdot1},\ldots,u_{\cdot5})^\top
  =(5s,\ldots,5s,4s,\ldots,4s)^\top,
\]
and the configuration matrix is
\[
  A=\left(\begin{array}{c}
  E_4 \otimes 1_5\\
  1_4 \otimes E_5
  \end{array}\right),  \quad
  (e_r)_{ij}:=\delta_{i,j},~i,j\in[r],
  \quad 1_r:=(\overbrace{1,\ldots,1}^r).
\]
These tables are unrealistically symmetric, but advantages of this
symmetry for the following investigations are that we know
the expected counts of each cell for the independent model is
uniform, and we expect that the Metropolis chain starting from
this tables will mix rapidly. The rational MLE of the expected
count is \eqref{rmle_2way} with $i\in[4]$ and $j\in[5]$.
For the non-independence model, we set the odds ratios
($x$ in Definition~\ref{defi:toric}) as
\begin{equation}\label{odds}
  \left(
  \begin{array}{ccccc}
    3&2&1&1&1\\
    2&2&1&1&1\\
    1&1&1&1&1\\
    1&1&1&1&1\\
  \end{array}
  \right).
\end{equation}
For the generalized iterative proportional scaling (IPS), we adopt
the convergence criterion \eqref{conv} with $\epsilon=0.1$.
All computations involved converged within 20 steps.

First, we generated one million contingency tables using the direct
sampling algorithm (exact and approximate for the independence and
non-independence models, respectively). We computed the empirical
distribution $\hat{p}_{\rm D}$ of the chi-square values, which should
be a good approximation to the stationary distribution for
the independence model.

Then, we assessed the convergence of the Metropolis chain
to the stationary distribution. For the Metropolis chain of four
pairs of the burn-in and length:
$(0,10^3)$, $(10^3,10^4)$, $(10^4,10^4)$, and $(10^4,10^5)$,
respectively, we computed the empirical distribution
$\hat{p}_{\rm M}$ of the chi-square values and calculated
the total variation distance \eqref{TV}. Here, we set the expected
value of each cell in the chi-square to $s$ for both
the independence and non-independence models with the odds
ratios \eqref{odds}. We show the averages of the results obtained
by repeating this procedure 10 times in Table~\ref{tabl:1}. The pair
$(0,10^3)$ gave the poorest results because only a few
contingency tables appeared in a Metropolis chain, and they
cannot approximate the stationary distribution. The pairs
$(10^3,10^4)$ and $(10^4,10^4)$ were equally good, which
implies that the Metropolis chain converged after 10,000
steps. For the pair $(10^4,10^5)$, the approximation is even
better because the empirical distribution approximates
the stationary distribution better by many contingency tables
because of the Glivenko--Cantelli theorem. These trends were
similar for non-independence models.
Another important observation is that for approximate sampling
of the non-independence models, the total variation distance
did not decrease as $s$ increases.
Although one might expect the total variation distance
to decrease as the bias of the MLE decreases with increasing
sample size (see Remark~\ref{rema:bias}), the observed
result is the opposite: the total variation distance
increases. This contrast indicates that the Monte Carlo
error dominates the bias, making the impact of the bias
comparatively minor. Indeed, the total variation distances showed
a similar trend for the direct sampling from the independence
model, where the direct sampling is exact without bias.
\begin{table}[t]
  \small
  \caption{The total variation distances between the
    empirical distributions of chi-square values generated
    by the Metropolis algorithm and that by the direct sampling
    algorithm (exact and approximate for the independence
    and non-independence models, respectively).}
  \label{tabl:1}\smallskip
  \centering
  \begin{tabular}{cccccc}
  &     &\multicolumn{4}{c}{(burn-in, length)}\\
  Model &$s$&$(0,10^3)$&$(10^3,10^4)$&$(10^4,10^4)$&$(10^4,10^5)$\\
  \hline  
  Independence
        &1  &0.133 &0.039 &0.035 &0.012\\ 
        &2  &0.137 &0.041 &0.043 &0.015\\
        &5  &0.192 &0.055 &0.062 &0.019\\
        &10 &0.236 &0.072 &0.073 &0.025\\
  \hline  
  Non-independence
        &1  &0.107 &0.036 & 0.031 &0.017\\
              &2  &0.141 &0.047 & 0.046 &0.025\\
              &5  &0.190 &0.063 & 0.065 &0.029\\
              &10 &0.273 &0.089 & 0.083 &0.037\\
    \hline
  \end{tabular}  
\end{table}

For the Metropolis algorithm, the same Markov basis can
be applied whether the model is independent or not, and
the computational cost does not change significantly.
On the other hand, for the direct sampling algorithm,
the cost of calculating the transition probabilities is
an issue. For the independence model, the transition
probabilities are given by the rational MLE and can be
calculated immediately, but for the non-independence model,
calculating the transition probabilities takes time.
The purpose here is to evaluate the computational cost.

Given the above results, we drew 10,000 tables after
the burn-in of 10,000 steps for the Metropolis chain.
We computed the ESS of the 10,000 tables drawn by
the Metropolis algorithm and drew the same number of
tables using the direct sampling algorithm.
Table~\ref{tabl:2} shows the averages of the results
obtained by repeating this procedure 100 times. It shows
the times it took to draw 10,000 tables using the Metropolis
algorithm (including the burn-in), the time it took to draw
the tables of the number of the ESS using the direct sampling
algorithm, and one table using the direct
sampling algorithm. For the independence model, the direct
sampling algorithm was uniformly more efficient
than the Metropolis algorithm. For the non-independence model,
although the direct sampling algorithm was less
  efficient than the Metropolis
algorithm, even when the total count is 200 ($s=10$), which
is the most computationally expensive for the direct sampling
algorithm, the direct sampling algorithm took only about six
times as long as the Metropolis algorithm.
\begin{table}[t]
  \small
  \caption{The times to draw 10,000 tables by the Metropolis algorithm,
    the effective sample sizes (ESS) of the 10,000 tables,
    the times to draw the same number of tables by the direct
    sampling algorithm (exact and approximate for the independence
    and non-independence models, respectively), and the times per single
    table. Times are in seconds.}
  \label{tabl:2}\smallskip
  \centering
  \begin{tabular}{cccccc}
Model  &$s$ &ESS &Metropolis & Direct & 1 table\\
\hline
Independence &1   &327 &0.604 &0.193&0.0006\\ 
(Exact) &2   &265 &0.663 &0.308&0.0012\\
             &5   &155 &0.750 &0.448&0.0029\\
             &10  &103 &0.839 &0.594&0.0058\\
\hline
Non-independence &1  & 327 & 0.650 & 1.543 & 0.005\\
(Approximate) &2  & 257 & 0.726 & 2.876 & 0.011\\
             &5  & 145 & 0.846 & 4.683 & 0.032\\
             &10 & 81  & 0.945 & 5.711 & 0.071\\
\hline
\end{tabular}
\end{table}

At the end of this subsection, we compare the accuracy and
computational efficiency of the (exact) direct sampling algorithm
and the approximate algorithm for the non-independent model.
We performed calculations using rational arithmetic in
the {\tt Risa/Asir} computational algebra system \cite{NT92},
since the direct sampling algorithm is implemented on
{\tt Risa/Asir}. Since Algorithm~1 was infeasible for
the contingency tables discussed above, we investigated
the following two-way contingency tables with fixed marginal
sums for $s=1,2,5,10$:
\[
\begin{array}{cccc|c}
    u_{11}&u_{12}&u_{13}&u_{14}&4s\\
    u_{21}&u_{22}&u_{23}&u_{24}&4s\\
    u_{31}&u_{32}&u_{33}&u_{34}&4s\\
    \hline
    3s&3s&3s&3s&12s
  \end{array}
\]
with setting the odds ratios as
\[
  \left(
  \begin{array}{cccc}
    3&2&100/101&1\\
    200/104&200/103&100/102&1\\
    1&1&1&1\\
  \end{array}
  \right).
\]
\begin{table}[t]
  \small
  \caption{The times to draw 1,000 and
    10,000 tables by the exact direct sampling
    algorithm and the approximate direct sampling
    algorithm, respectively, for the non-independence
    model. The total variation distances between
    the empirical distributions of chi-square values
    generated by these algorithms are also shown.
    The total variation distances between the exact
    direct sampling algorithm and the Metropolis
    algorithm are also shown. Times are in seconds.}
  \label{tabl:3}\smallskip
  \centering
  \begin{tabular}{cccccc}
    & Exact & \multicolumn{2}{c}{Approximate} & Metropolis\\
$s$ & Time  & Time & Distance & Distance \\
\hline
1   & 1581  &  308  & 0.019 & 0.032 \\
2   & 3496  &  721  & 0.051 & 0.054 \\
5   & 8513  & 2171  & 0.096 & 0.099 \\
10  & 18270 & 5170  & 0.138 & 0.151 \\
\hline
\end{tabular}
\end{table}

We generated 1,000 and 10,000 tables using the exact
direct sampling algorithm and the approximate direct
sampling algorithm, respectively. Table~\ref{tabl:3}
shows the times they took to draw tables, and
the results for the approximate direct sampling
algorithm are the averages of the results obtained
by repeating this procedure 10 times. The approximate
direct sampling algorithm performed around 35-50
times faster than the exact direct sampling algorithm.
The total variation distances between the empirical
distributions of chi-square values generated by
the exact and approximate direct sampling algorithms,
as well as between the exact direct sampling algorithm
and the Metropolis algorithm, are also shown.

For the Metropolis chain, 10,000 tables were drawn
following a 10,000-step burn-in. The shown values are
averages across 10 repetitions. Although the Metropolis
algorithm approximates the stationary distribution
well, as shown by the results in Table~\ref{tabl:2}
for larger contingency tables, the approximate direct
sampling algorithm gave a distribution closer to that
of the exact direct sampling algorithm than
the Metropolis algorithm did. It is notable that
the approximate algorithm achieves higher accuracy
than the Metropolis algorithm for a fixed number of
tables, despite the bias introduced by using the MLE
to calculate the transition probabilities. This is not
surprising, because the effective sample size of tables
drawn by the Metropolis algorithm is far smaller than
the number of tables. See Table~\ref{tabl:2} for
an example.

\subsection{No-three-way interaction model of three-way table}
\label{subsect:3way}

We investigated the performance of the approximate direct sampling
algorithm by exemplifying the following no-three-way interaction
model of three-way contingency tables with fixed marginal sums for
$s=1,2,5$ and $10$:
\[
\begin{array}{ccc|c}
    u_{111}&u_{112}&u_{113}&3s\\
    u_{121}&u_{122}&u_{123}&3s\\
    u_{131}&u_{132}&u_{133}&3s\\
    \hline
    3s&3s&3s&9s 
\end{array} \quad
\begin{array}{ccc|c}
    u_{211}&u_{212}&u_{213}&3s\\
    u_{221}&u_{222}&u_{223}&3s\\
    u_{231}&u_{232}&u_{233}&3s\\
    \hline
    3s&3s&3s&9s 
\end{array} \quad
\begin{array}{ccc|c}
    2s&2s&2s&6s\\
    2s&2s&2s&6s\\
    2s&2s&2s&6s\\
    \hline
    6s&6s&6s&18s 
\end{array},
\]
where the vector
\begin{align*}
u=(&u_{111},u_{112},u_{113},u_{121},u_{122},u_{123},u_{131},u_{132},u_{133},\\
&u_{211},u_{212},u_{213},u_{221},u_{222},u_{223},u_{231},u_{232},u_{233})^\top.
\end{align*}
is random, the vector of the sufficient statistics is
\begin{align*}  
  b=(&
  u_{11\cdot},u_{12\cdot},u_{13\cdot},u_{21\cdot},u_{22\cdot},u_{23\cdot},
  u_{1\cdot1},u_{1\cdot2},u_{1\cdot3},u_{2\cdot1},u_{2\cdot2},u_{2\cdot3},\\
  &u_{\cdot11},u_{\cdot12},u_{\cdot13},u_{\cdot21},u_{\cdot22},u_{\cdot23},
  u_{\cdot31},u_{\cdot32},u_{\cdot33})^\top\\
  =(&
  3s,\ldots,3s,2s,\ldots,2s)^\top,
\end{align*}
and the configuration matrix is
\[
  A=\left(
  \begin{array}{c}
    E_6\otimes 1_3\\
    E_2\otimes 1_3\otimes E_3\\
    1_2\otimes E_9
  \end{array}\right).
\]
A Markov basis consists of \cite{DS98} 
\[
  \begin{bmatrix}1i_2i_3\\1j_2j_3\\2i_2j_3\\2j_2i_3\end{bmatrix}-
  \begin{bmatrix}1i_2j_3\\1j_2i_3\\2i_2i_3\\2j_2j_3\end{bmatrix},
    \quad
    \{(i_2,j_2,i_3,j_3)\in[3]^4: i_2<j_2, i_3<j_3\},
\]
which represents nine vectors with the components of $u$ in the first
bracket are $+1$, those in the seconds brackets are $-1$, and
the other components are 0, and six vectors with 12 non-zero
elements including
\[
  \begin{bmatrix}11i_1\\12i_2\\13i_3\\21j_1\\22j_2\\23j_3\end{bmatrix}-
  \begin{bmatrix}11j_1\\12j_2\\13j_3\\21i_1\\22i_2\\23i_3\end{bmatrix},
\]
where $(i_1,i_2,i_3)$ is a permutation of $(1,2,3)$ and determines
$(j_1,j_2,j_3)$ by the condition $u\in{\rm Ker}A\cap\mathbb{Z}^{18}$.
For example, if $(i_1,i_2,i_3)=(1,3,2)$, then
$(j_1,j_2,j_3)=(2,1,3)$.

First, we assessed the convergence of the generalized IPS.
Unlike the case of the two-way contingency tables in
Section~\ref{subsect:2way}, setting $\epsilon$ in equation
\eqref{conv} needed care. Proposition~\ref{prop:lattice}
guarantees that a sample path is on the Markov lattice,
even for the approximate direct sampling algorithm.
However, if the convergence was insufficient, a sample path
hit outside the Markov lattice because of the numerical error
in computing transition probabilities with the floating point
arithmetic. We confirmed that the generalized IPS always
converged if we took small $\epsilon$ but demanded many
iterations. In the following numerical experiments, we set
$\epsilon=0.005$ throughout. If we could not achieve convergence
by 1,000 iterations (less than 0.06\%), we discarded the sample
path and drew another contingency table.

Then, for $s=2,5$ and $10$, we generated a half million
contingency tables using the approximate direct sampling
algorithm and computed the empirical distribution $\hat{p}_{\rm D}$
of the chi-square values. For $s=1$, simple computation yields
the distribution of chi-square values \eqref{chi2}:
\begin{equation}\label{no3_e}
  p(0)=\frac{16}{37}\doteq0.432, \quad
  p(8)=\frac{18}{37}\doteq0.486, \quad
  p(12)=\frac{3}{37}\doteq0.081,
\end{equation}
and $p(z)=0$ if $z\notin\{0,8,12\}$, and we used this.

We assessed the convergence of the Metropolis chain to
the stationary distribution. For the Metropolis chain of four
pairs of the burn-in and length:
$(10^3,10^4)$, $(10^4,10^4)$, $(10^5,10^4)$ and $(10^5,10^5)$,
respectively, we computed the empirical distribution $\hat{p}_{\rm M}$
of the chi-square values and calculated the total variation
distance \eqref{TV}. Here, we set the expected value of each cell
to $s$. We show the averages of the results obtained by repeating
this procedure 10 times in Table~\ref{tabl:4}. For $s=1$, we have
\[
  \hat{p}_M(0)\doteq0.427, \quad
  \hat{p}_M(8)\doteq0.491, \quad
  \hat{p}_M(12)\doteq0.082, \quad
\]
and $p(z)=0$ if $z\notin\{0,8,12\}$. The closeness to
the true distribution \eqref{no3_e} of the chi-square values
demonstrates that the MLE performs a good approximation of
the UMVUE. The pairs $(10^4,10^4)$ and $(10^5,10^4)$ were
equally good, which implies that the Metropolis chain converged
after 100,000 steps. An important observation is that the total
variation distance of the pair $(10^5,10^5)$ decreases as $s$
increase except $s=1$ (the Monte Carlo error was small for $s=1$
since we used not the empirical distribution $\hat{p}_D$ but
the true distribution \eqref{no3_e}).
  This is plausible, since as increasing the sample size
  reduces the bias of the MLE (see Remark~\ref{rema:bias}).  
Unlike the case of the two-way contingency tables
(Table~\ref{tabl:1}), the bias decrease is evident.
\begin{table}[t]
  \small
  \caption{The total variation distances between the empirical
    distributions of chi-square values generated by
    the Metropolis algorithm and that by the approximate direct
    sampling algorithm.}
  \label{tabl:4}\smallskip
  \centering
  \begin{tabular}{ccccc}
       &\multicolumn{4}{c}{(burn-in, length)}\\
  $s$&$(10^3,10^4)$&$(10^4,10^4)$&$(10^5,10^4)$&$(10^5,10^5)$\\
  \hline  
  1  &0.019 &0.028 &0.023 &0.007\\ 
  2  &0.037 &0.027 &0.034 &0.029\\
  5  &0.030 &0.039 &0.036 &0.017\\
  10 &0.035 &0.031 &0.031 &0.013\\
  \hline
  \end{tabular}  
\end{table}

Given the above results, we drew 10,000 tables after
the burn-in of 100,000 steps for the Metropolis chain.
We computed the ESS of the 10,000 tables drawn by
the Metropolis algorithm and drew the same number of
tables using the direct sampling algorithm. Table~\ref{tabl:5}
shows the averages of the results obtained by repeating
this procedure 10 times. It shows the times it took
to draw 10,000 tables using the Metropolis algorithm
(including the burn-in), the time it took to draw the
tables of the number of the ESS using the approximate direct
sampling algorithm, and one table using the direct sampling
algorithm. The approximate direct sampling algorithm was
uniformly less efficient than the Metropolis algorithm.
When the total count is 180 ($s=10$), it took about 36 times
as long as the Metropolis algorithm.
\begin{table}[t]
    \small
    \caption{The times to draw 10,000 tables by the Metropolis
      algorithm, the effective sample sizes (ESS) of the 10,000
      tables, the times to draw the same number of tables by
      the approximate direct sampling algorithm, and the times
      per single table. Times are in seconds.}
  \label{tabl:5}\smallskip
    \centering
  \begin{tabular}{ccccc}
$s$ &ESS &Metropolis & Approx.-Direct & 1 table\\
\hline
1   &444 &3.877 & 25.4&0.057\\ 
2   &844 &4.012 & 83.7&0.099\\
5   &717 &4.241 &130.3&0.182\\
10  &544 &4.574 &165.0&0.303\\
\hline
\end{tabular}
\end{table}

\section{Discussion}\label{sect:disc}

Previous implementations of the direct sampling algorithm
\cite{Man17,MT21+} have enabled exact sampling, but depending
on the model, the computational burden makes it challenging to
put into practical use. In this study, we proposed
the approximate algorithm applicable to any log-affine model
with the expense of the bias, that asymptotically disappears.
In contrast to deriving the Markov basis in the Metropolis
algorithm or the connection matrix of the holonomic ideal in
the exact sampling, we can implement the approximate sampling
algorithm straightforwardly since it demands no prior
investigations; it only needs existing methods to compute the MLE.
Furthermore, as shown in Section~\ref{subsect:2way}
for a non-independence model of two-way contingency tables,
the approximate direct sampling algorithm achieves more accurate
sampling than the Metropolis algorithm for a fixed number of
tables in the same order of time, despite the bias coming from
using the MLE to compute the transition probability.

In the worst case, approximate direct sampling for
the no-three-way interaction model in Section~\ref{subsect:2way}
took 36 times the computational time of the Metropolis algorithm.
While it is true that approximate direct sampling is computationally
expensive, we should note that a Markov basis for the no-three-way
interaction model is known and given. If a Markov basis is unknown
for a model and cannot be calculated, the Metropolis algorithm
cannot be applied. In contrast, the approximate direct sampling is
always feasible as long as maximum likelihood estimation is
possible.

The approximate direct sampling algorithm can be improved.
In terms of accuracy, the MLE approaches the UMVUE by
reducing the bias. The UMVUE could be used in the later steps
of the algorithm where the total number of counts becomes small.
For computing the MLE, Newton's method is likely faster than
the generalized iterative proportional scaling.
However, these are typical tasks in statistical
analysis when applying an algorithm to each model and are beyond
the scope of this paper, which aims to propose an approximate
sampling algorithm in its most general form.

Sampling from discrete exponential families has been
a long-standing problem, but as this paper showed, we can relax
it by sequentially applying the MLE instead of the UMVUE. This
relaxation is a simple idea; it would not be surprising if it had
existed half a century ago. 

\section*{Acknowledgements}

The author thanks the referees for their valuable comments
on improving the presentation.
He also thanks a referee for pointing out the mistake in the
original version of Theorem~2.9 and suggesting improvements.
This work was supported
in part by JSPS KAKENHI Grant Numbers 20K03742 and 24K06876.

\section*{Appendix}

We show that the rational MLE and the UVMUE coincide
in the two-way contingency table of the quasi-independence
model that appeared in Example~\ref{exam:q2x2}. If we fix
$u_{13}$ and $u_{31}$, we have the $2\times 2$ subtable:
\[
\begin{array}{cc|c}
    u_{11}&u_{12}&u_{1\cdot}-u_{13}\\
    u_{21}&u_{22}&u_{2\cdot}-u_{23}\\
    \hline
    u_{\cdot1}-u_{31}&u_{\cdot2}-u_{32}&
    u_{1\cdot}+u_{2\cdot}-u_{\cdot3}
  \end{array},
\]
where
$u_{1\cdot}+u_{2\cdot}-u_{\cdot3}=u_{\cdot1}+u_{\cdot2}-u_{3\cdot}$,
$u_{3\cdot}=u_{31}+u_{32}$, and $u_{\cdot3}=u_{13}+u_{23}$.
The probability mass function of the hypergeometric distribution
gives
\[
\sum_{u_{11}}\frac{1}{u_{11}!u_{12}!u_{21}!u_{22}!}
=\frac{(u_{1\cdot}+u_{2\cdot}-u_{\cdot3})!}
{(u_{1\cdot}-u_{13})!(u_{2\cdot}-u_{23})!(u_{\cdot1}-u_{31})!
 (u_{\cdot2}-u_{32})!}.
\]
Therefore,
\begin{align*}
Z_A(b;1)&=\sum_{u_{13},u_{31}}
\frac{1}{u_{13}!u_{23}!u_{31}!u_{32}!}
\frac{(u_{1\cdot}+u_{2\cdot}-u_{\cdot3})!}
{(u_{1\cdot}-u_{13})!(u_{2\cdot}-u_{23})!(u_{\cdot1}-u_{31})!
  (u_{\cdot2}-u_{32})!}\\
&=(u_{1\cdot}+u_{2\cdot}-u_{\cdot3})!\sum_{u_{13}}\frac{1}{u_{13}!(u_{1\cdot}-u_{13})!(u_{\cdot3}-u_{13})!(u_{2\cdot}-u_{\cdot3}+u_{13})!}\\
& ~~ \times \sum_{u_{31}}\frac{1}{u_{31}!(u_{\cdot1}-u_{31})!(u_{3\cdot}-u_{31})!(u_{\cdot2}-u_{3\cdot}+u_{31})!}\\
&=\frac{(u_{1\cdot}+u_{2\cdot}-u_{\cdot3})!}
{u_{1\cdot}!u_{\cdot 3}!(u_{2\cdot}-u_{\cdot3})!
  u_{\cdot1}!u_{3 \cdot}!(u_{\cdot2}-u_{3\cdot})!}\\
& ~~ \times
{}_2F_1(-u_{1\cdot},-u_{\cdot 3},u_{2\cdot}-u_{\cdot3}+1)
\times{}_2F_1(-u_{\cdot1},-u_{3\cdot },u_{\cdot2}-u_{3\cdot}+1)\\
&=\frac{(u_{1\cdot}+u_{2\cdot})!(u_{\cdot1}+u_{\cdot2})!}
{u_{\cdot1}!u_{\cdot2}!u_{\cdot3}!u_{1\cdot}!u_{2\cdot}!u_{3\cdot}!(u_{\cdot1}+u_{\cdot2}-u_{3\cdot})!},
\end{align*}
where in the last equality we used the Gauss hypergeometric
theorem
\[
  {}_2F_1(\alpha,\beta;\gamma;1)=
  \frac{\Gamma(\gamma)\Gamma(\gamma-\alpha-\beta)}
       {\Gamma(\gamma-\alpha)\Gamma(\gamma-\beta)},
  \quad
  {\rm Re}(\gamma)>{\rm Re}(\alpha+\beta), \quad      
  -\gamma\not\in \mathbb{N}
\]
for $\alpha,\beta,\gamma\in\mathbb{C}$. Then, the UMVUE is
\[
  \tilde{\mu}_{13}(b;1)=\frac{Z_A(b-a_{13};1)}{Z_A(b;1)}=
  \frac{u_{1\cdot}u_{\cdot3}}{u_{1\cdot}+u_{2\cdot}},
\]
where $a_{13}$ is the column vector of $A$ corresponding
to the first row and the third column. The UMVUE coincides
with the rational MLE in Example~2.5 of \cite{CS21}.

\begin{flushleft}

Shuhei Mano\\
The Institute of Statistical Mathematics, Tokyo 190-8562, Japan\\
E-mail: smano@ism.ac.jp

\end{flushleft}


\begin{thebibliography}{99}

\bibitem{AHT12}
  Aoki, S., Hara, H., Takemura, A.,
  \textit{Markov Bases in Algebraic Statistics},
  Springer, New York, 2012.

\bibitem{CDS06}
  Chen, Y., Dinwoodie, I.H., Sullivant, S.,
  Sequential importance sampling for multiway tables,
  Ann.\ Stat.,
  \textbf{34} (2006), 523--545.
  
\bibitem{CLR24}
  Coons, J.I., Langer, C., Ruddy M.,
  Classical iterative proportional scaling of log-linear models
  with rational maximum likelihood estimator,
  Int.\ J.\ Approx.\ Reason.,
  \textbf{164} (2024), 109043.

\bibitem{CS21}
  Coons, J.I., Sullivant, S.,
  Quasi-independence models with rational maximum likelihood
  estimator,
  J.\ Symb.\ Comput.,
  \textbf{104} (2021), 917--941.
  
\bibitem{CLO07}
  Cox, D., Little, J., O'Shea, D.,
  \textit{Ideals, Varieties, Algorithms}. 3rd Ed.,
  Springer, New York, 2007.

\bibitem{DR72}
  Darroch, J.N., Ratcliff, D.,
  Generalized iterative scaling for log-linear models,
  Ann.\ Math.\ Stat.,
  \textbf{43} (1972), 1470--1480.
  
\bibitem{DS98}
  Diaconis, P., Sturmfels, B.,
  Algebraic algorithms for sampling from conditional distributions,
  Ann.\ Stat.,
  \textbf{26} (1998), 363--397.

\bibitem{DMS21}
  Duarte, E., Marigliano, O, Sturmfels, B.,
  Discrete statistical models with rational maximum likelihood estimator,
  Bernoulli,
  \textbf{27} (2021), 135--154.

\bibitem{GMS06}
  Geiger, D., Meek, C., Sturmfels, B.,
  On the toric algebra of graphical models,
  Ann.\ Stat.,
  \textbf{34} (2006), 1463--1492.

\bibitem{Hab74}
  Haberman, S.J.,
  \textit{The Analysis of Frequency Data},
  University of Chicago Press, Chicago, 1974.
  
\bibitem{Huh14}
  Huh, J.,
  Varieties with maximum likelihood degree one,
  J.\ Algebraic Stat.,
  \textbf{5} (2014), 1--17.

\bibitem{Lau94}
  Lauritzen, S.,
  \textit{Graphical Models},
  Oxford Univ.\ Press, Oxford, 1994.

\bibitem{LC98}
  Lehmann, E.L., Casella, G.,
  \textit{Theory of Point Estimation}, 2nd. Ed.
  Springer, New York, 1998.
  
\bibitem{LP17}
  Levin, D.A., Peres Y.,
  \textit{Markov Chains and Mixing Times}, 2nd. ed., 
  Amer.\ Math.\ Soc., Providence, RI, 2017.

\bibitem{Man17}
  Mano, S.,
  Partition structure and the $A$-hypergeometric distribution associated
  with the rational normal curve,
  Electron.\ J.\ Stat.,
  \textbf{11} (2017), 4452--4487.

\bibitem{Man18}
  Mano, S.,
  \textit{Partitions, Hypergeometric Systems, and Dirichlet Processes
  in Statistics},
  Springer Briefs in Statistics, Springer, Tokyo, 2018.

\bibitem{MT21+}
  Mano, S., Takayama, N.,
  Algorithm for direct sampling from conditional distributions
  of toric models, preprint. arXiv: 2110.14992.

\bibitem{NT92}
  Noro, M. and Takeshima, T.,
  Risa/Asir: a computer algebra system,
  the proceedings of the international symposium on symbolic and
  algebraic computation ISSAC'92, 387--396, 1992,
  {\tt http://www.math.kobe-u.ac.jp/Asir}.
  
\bibitem{OT15}
  Ohara, K., Takayama, N.,
  Pfaffian systems of $A$-hypergeometric systems II -- holonomic
  gradient method, preprint.
  arXiv: 1505.02947.

\bibitem{RC04}
  Robert, C.P., Casella, G.,
  \textit{Monte Carlo Statistical Methods}, 2nd. ed.
  Springer, New York, 2004.

\bibitem{SST00}
  Saito, M., Sturmfels B., Takayama N.,
  \textit{Gr\"obner Deformations of Hypergeometric Differential
  Equations},
  Springer, Berlin, 2000.
  
\bibitem{Sul18}
  Sullivant, S.,
  \textit{Algebraic Statistics},
  Amer.\ Math.\ Soc., Providence, RI, 2018.

\end{thebibliography}
\end{document}